\pgfplotsset{compat=1.18} 
\theoremstyle{plain}
\newtheorem{theorem}{Theorem}
\newtheorem{corollary}[theorem]{Corollary}
\newtheorem{lemma}[theorem]{Lemma}
\theoremstyle{definition}
\newtheorem{remark}[theorem]{Remark}
\newtheorem{definition}[theorem]{Definition}
\def\ba{\begin{array}}
\def\ea{\end{array}}
\def\bi{\begin{itemize}}
\def\ei{\end{itemize}}
\def\mP{\mathbb{P}}
\def\mZ{\mathbb{Z}}
\def\mE{\mathbb{E}}
\def\m1{1}
\def\eps{\varepsilon}
\def\cY{\mathcal{Y}}
\def\cB{\mathcal{B}}
\def\b1{{\mathbf 1}}
\def\nka{n^{\frac{k+2}{k+1}}}
\def\nkb{n^{\frac{k+2}{2k+2}}}
\def\nkmb{n^{-\frac{k+2}{2k+2}}}
\def\nkc{n^{\frac{1}{k+1}}}
\def\nkmc{n^{-\frac{1}{k+1}}}
\def\nkd{n^{\frac{1}{2k+2}}}
\DeclareMathOperator*{\argmin}{arg\,min}
\title{Mixing on the cycle with constant size perturbation}
\author{Shi Feng\thanks{S. Feng is with Cornell University, Department of Mathematics (email: {\tt sf599@cornell.edu}). }%
\and Bal\'azs Gerencs\'er\thanks{B.\ Gerencs\'er is with HUN-REN Alfr\'ed R\'enyi Institute of Mathematics, Budapest, Hungary and E\"otv\"os Lor\'and University, Department of Probability and Statistics, Budapest, Hungary, (email: {\tt gerencser.balazs@renyi.hu}). His research was supported by NRDI (National Research, Development and Innovation Office) grant KKP 137490. }}
\date{\today}
\begin{document}

\maketitle

\abstract{Considering a Markov chain defined on a cycle, near-quadratic improvement of mixing is shown when only a subtle perturbation is introduced to the structure and non-reversible transition probabilities are used.
More precisely, a mixing time of $O(\nka)$ can be achieved by adding $k$ random edges to the cycle, keeping $k$ fixed while $n\to\infty$.
The construction builds upon a biased random walk along the cycle.
}

\section{Introduction}

On the quest for finding and creating Markov chains with efficient mixing, a natural approach is to adjust the non-zero transition probabilities of a reference chain. A further step is to increase the number of allowed transitions, and to analyze what can be achieved. In other words, a graph $G=(V,E)$ of allowed transition is given and possibly augmented. 
Once a transition probability structure $P(\cdot,\cdot)$ is specified with stationary distribution $\pi$, finite-time behavior is considered to assess efficiency through 
the \emph {total variation mixing time}, capturing the time needed to approach $\pi$ within a tolerance $\eps>0$:
\[
    t_{\rm mix}(\eps) = \min\{t: \max_i \|P^t(i,\cdot)-\pi(\cdot)\|_{\rm TV} \le \eps\}.
\]

In this paper we analyze an example on the power of minor augmentation, where the number of new edges is kept constant while the instance grows, combining a cycle graph, a biased random walk as a reference chain, and some additional edges. Still, a near quadratic improvement of the mixing time
can be achieved, from $\Theta(n^2)$ down to any $\Theta(n^{1+c}), c>0$, as specified and stated below, closing the gap towards the inherent bound posed by the diameter.

In more detail, we analyze Markov chains on a minor perturbation of the cycle graph $\mZ_n = \mZ/n\mZ$, depending on a few parameters to be fixed throughout the investigation.
Most importantly, let the number of added edges be a constant $k\in\mZ^+$ while $n$ is to be increased. We randomly uniformly choose a $2k$-tuple among the vertices of the cycle and introduce a uniformly random perfect matching among these vertices ( possibly creating multiple edges). These exceptional points will be called \emph{hubs}, with their added edges among them.

In order to acquire asymptotic bounds on mixing, we also fix the local transition structure for the Markov chain on the graph just introduced. That is, choose $p,q,a>0$ such that $p+q+a\le 1$ and $p>q$. Then at any vertex $p$ is the forward, $q$ is the backward transition probability along the cycle $\mZ_n$ using its natural orientation, $a$ is “across”, for the probability of using the added edge, if any. The remaining amount is used as a loop probability, if any. It is easy to see that the stationary distribution is uniform.

Let us now state our main result determining the exact polynomial rate of the mixing time, extending the previous work \cite{gb:ringmixing_singleedge2023}.

\begin{theorem}
\label{thm:mainthm}
Considering the upper bound: for any $\delta>0$ target exception probability there exist $\gamma(\delta)<\infty$ 
with the following property. For $n$ large enough and any $0<\eps<1/2$, with probability at least $1-\delta$ in terms of the $k$ random extra edges we have
\begin{equation}
    \label{eq:mainthm_bound}
    t_{\rm mix}(\eps) \le \gamma(\delta) n^{\frac{k+2}{k+1}} \log\frac 1 \eps.
\end{equation}
Considering the lower bound: there exist $\gamma^*,\eps^*>0$ 
with the following property. For $n$ large enough asymptotically almost surely (a.a.s.) in terms of the $k$ random extra edges we have
\begin{equation}
    \label{eq:thm_lowerbound}
    t_{\rm mix}(\eps^*) \ge \gamma^* n^{\frac{k+2}{k+1}}.
\end{equation}
Furthermore, no cutoff occurs.
\end{theorem}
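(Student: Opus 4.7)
I would lift the walk from $\mZ_n$ to $\mZ$ and bound the Fourier coefficients of the time-$t$ distribution on $\mZ_n$. Writing $g_1,\ldots,g_k\in\mZ_n$ for the signed chord displacements of the $k$ shortcut edges and $N_i(t)$ for the signed net number of times shortcut $i$ is used by time $t$, the lifted position satisfies
\[
    X_t - X_0 \;=\; t(p-q) + W_t - \sum_{i=1}^k N_i(t)\,g_i,
\]
where $W_t$ is the centered martingale from the on-cycle $\pm 1$ steps. The problem then reduces to bounding $|\widehat{P^t}(m)|=|\mE_x[e^{2\pi i m (X_t-X_0)/n}]|$ for each $m\in\{1,\ldots,n-1\}$ and summing via Plancherel.

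For the upper bound, I would first establish a typical-configuration event of probability $\ge 1-\delta$: the $2k$ hubs split the cycle into arcs of length $\Theta_\delta(n/k)$, and the displacements $(g_1,\ldots,g_k)$ satisfy a Diophantine spread property---for every $\eta\in[n^{-1},1]$, the number of $m\in\{1,\ldots,n-1\}$ with $\|mg_i/n\|_{\mR/\mZ}\le\eta$ for all $i$ is $O_\delta(n\eta^k)$---which holds with high probability for uniform $g_i$'s. Next, a renewal-type argument gives $N_i(t)$ approximately centered Gaussian of variance $\lambda:=\Theta(tak/n)$, and by coupling the true chain to an idealized translation-invariant version, in which shortcut jumps of size $\pm g_i$ occur at uniform rate $ak/n$ independently of the current position, one obtains the factorized estimate
\[
    |\widehat{P^t}(m)| \;\lesssim\; \exp\!\Bigl(-c\,t(m/n)^2 - c\lambda\sum_{i=1}^k \sin^2(\pi m g_i/n)\Bigr).
\]
For $t=\gamma(\delta)\,n^{(k+2)/(k+1)}\log(1/\eps)$ this exponent exceeds $\log(n/\eps)$ for every $m\ne 0$: large $m$ are killed by the $W_t$ term, while the Diophantine bound controls how many $m$ can simultaneously satisfy $m\lesssim n/\sqrt t$ and $\|mg_i/n\|\lesssim 1/\sqrt\lambda$, with the balance giving exactly the threshold $n^{(k+2)/(k+1)}$. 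Plancherel then delivers the stated TV bound.

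For the lower bound, a pigeonhole argument in $[0,1]^{k+1}$ with box sides $\alpha$ in the first coordinate and $\beta$ in the remaining $k$ shows that a.a.s.\ there exists $m^\ast\ne 0$ with $m^\ast/n\lesssim\alpha$ and $\|m^\ast g_i/n\|\lesssim\beta$ for all $i$, provided $\alpha\beta^k\gtrsim 1/n$. Choosing $\alpha,\beta$ to balance the two contributions $t\alpha^2$ and $\lambda\beta^2$ keeps the total exponent of order one precisely up to $t\asymp n^{(k+2)/(k+1)}$, so $|\widehat{P^t}(m^\ast)|$ stays bounded away from $0$ and hence $\|P^t-\pi\|_{TV}\ge\eps^\ast$. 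Absence of cutoff follows because the exponent is a smooth function of $t$ (Gaussian in the noise part, near-linear in the shortcut part) rather than a step function, so $t_{\rm mix}(\eps)-t_{\rm mix}(1-\eps)$ is of the same order as $t_{\rm mix}$ itself.

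The main difficulty lies in justifying the Fourier factorization rigorously. The true chain is \emph{not} translation invariant---shortcut jumps trigger only at the finite hub positions---so the coupling to the idealized chain must be controlled with some care. Moreover, at time $t=n^{(k+2)/(k+1)}$ each individual hub is visited only $\Theta(n^{1/(k+1)})$ times, which provides only modest renewal-type concentration for $N_i(t)$ and forces one to track the cross-correlations between $W_t$ and the shortcut counts. This quantitative Fourier analysis on a non-translation-invariant chain is expected to be the most technical component of the argument.
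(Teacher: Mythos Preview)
Your approach is genuinely different from the paper's. The paper never touches Fourier coefficients: it works directly with trajectories, showing (Section~2) that for each target signed-usage vector $\hat y$ in a box of side $\Theta(n^{1/(2k+2)})$ one has $\mP(y=\hat y)\gtrsim n^{-k/(2k+2)}$, then (Section~3) that the resulting endpoints $\xi_L=\xi_0+L+\sum \hat y_i l_i$ are $\Theta(n^{(k+2)/(2k+2)})$-dense in $\mZ_n$ with probability $\ge 1-\delta$, and finally (Section~4) that the diffusive fluctuation of the hitting time of distance $L$ smears each endpoint over an interval of that same width. This yields a uniform minorization $P^T(x,w)\ge \mu/n$ on a set $W$ with $|W|\ge n/2$, hence $\bar d(T)\le 1-\mu/2$ and the upper bound follows by submultiplicativity. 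The lower bound and no-cutoff come from rerunning the same machinery at a shorter time scale $\rho\,n^{(k+2)/(k+1)}$, where the reachable set $W$ still captures constant mass but has $|W|=O(\rho^{(k+1)/2}n)$.

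Your Fourier heuristic is internally consistent and the balance $t\asymp n^{(k+2)/(k+1)}$ falls out correctly. The Diophantine spread property you invoke is essentially the Fourier dual of the paper's Section~3, and one would expect a proof of it to look much like the paper's pigeonhole arguments there. The real gap is the one you flag yourself: the factorized bound
\[
|\widehat{P^t}(m)|\lesssim \exp\Bigl(-ct(m/n)^2-c\lambda\textstyle\sum_i\sin^2(\pi m g_i/n)\Bigr)
\]
is not a consequence of any standard coupling, because the idealized translation-invariant chain you describe has $2k$ jump types available at every site, while the true chain has at most one, determined by position. The variables $W_t$ and $N_i(t)$ are genuinely dependent through the trajectory, and ``coupling with some care'' is doing all the work. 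To control $\mE_x[e^{2\pi i m(X_t-x)/n}]$ uniformly in $x$ you would in practice need to condition on the sequence of arcs traversed and jumps taken---exactly the ``track'' decomposition the paper develops---and then argue that the within-arc walks contribute the Gaussian factor while the track distribution contributes the jump factor. So the Fourier route is viable in principle, but the missing step is not a technicality: filling it likely requires the same trajectory analysis the paper carries out, after which the direct minorization is arguably shorter than the Plancherel sum.

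Two smaller points. First, your Diophantine property as stated (counting $m\in\{1,\ldots,n-1\}$) is not quite what the sum needs; you want the count of $m$ with $|m|\le M$ \emph{and} $\|mg_i/n\|\le\eta$ for all $i$, i.e.\ a joint Diophantine condition on $(1,g_1,\ldots,g_k)$, which is what makes the balance $M\eta^k\asymp 1$ meaningful. Second, ``the exponent is smooth so no cutoff'' is not an argument: absence of cutoff requires exhibiting two times $T_1<T_2$ of the same order with $d(T_1)$ bounded away from $1$ and $d(T_2)$ bounded away from $0$. Your lower-bound Fourier mode gives the second; for the first you still need an upper bound on $d$ at the shorter time, which your Plancherel estimate does not obviously provide without the $\log(1/\eps)$ factor. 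The paper handles this by reusing the minorization at scale $\rho<1$.
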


\begin{remark}
The statement of Theorem \ref{eq:mainthm_bound} is spelled in a parametric way depending on the exception probability required. It is easy to see that an a.a.s.\ upper bound follows for any rate function $\eta(n)\gg n^{\frac{k+2}{k+1}}$. 
Moreover, simulations in Section \ref{sec:simulation} suggest this to be the sharp bound, i.e., $\gamma(0)\nka$ with a fixed constant $\gamma(0)$ the upper bound likely will not hold a.a.s.
\end{remark}

To put the current result in context, there has been quite some work to find speedup via randomized augmentation, results in this direction are able to identify rapid mixing, in $\Theta(\log n)$ for an instance of size $n$ of the (perturbed) Markov chains \cite{hermon2020universality}, in most cases exhibiting cutoff at the entropic time, in a similar way as it would happen when $G$ was fully random \cite{berestycki2018random, ben2018comparing}.
Decreasing the transition probabilities on the newly added edges causes the novel rapid mixing and cutoff to vanish \cite{baran2023phase} (with the critical quantities depending on the expansion of the original graph). The special case of the cycle, mixing on the Bollobás-Chung model, has been studied earlier \cite[Chapter 6.3]{durrett:rgd}.
To achieve these, a perfect matching is added to the base graph, increasing the degree of all vertices, in order to get the suitable expansion and transience. If the augmentation is smaller, resulting only in an average degree increase of some small $c>0$ (via an Erdős-Rényi graph, to be precise), it can be shown \cite{krivelevich2013smoothed} that the mixing time is $O(\log^2 n)$. Indeed, the appearance of a path of $\Omega(\log n)$ can occur (depending on the base graph) with local diffusive behavior.

In the current paper we demonstrate an example with much less perturbation leading to a
non-trivial speedup effect, using only $\Theta(1)$ new connections. However, it is easy to see that any reversible chain on the cycle with $\Theta(1)$ added edges would need $\Omega(n^2)$ time to mix.

The impressive results cited mostly consider the simple random walk (SRW) once the graph is generated. 
Alongside, often the non-backtracking walk (NBRW), a Markov-chain on edges, is investigated and shown to mix faster, see also \cite{diaconismiclo:markov2spectral2013}. Achieving similar speedup for a walk on vertices can be hoped by allowing non-reversible variants. The advantage has been demonstrated by efficient examples \cite{diaconis:nonrev2000}, even generalized to the developed concept of lifting \cite{chen_and_al:lifting1999}, \cite{apers2021characterizing}.

This closes the loop, for our current setup we include the essential non-reversible modification, which leads to the proven speedup.

The structure of the paper is as follows. 
The proof of Theorem \ref{thm:mainthm} is split into Sections \ref{sec:track}, \ref{sec:spread} and \ref{sec:diffuse}. Numerical results complement the current analysis in Section \ref{sec:simulation}, followed by final remarks in Section \ref{sec:conclusions}.

In the analysis to follow a scaling factor $\rho$ will be used along the principal time horizon $\nka$ to allow for fine-tuning. Several global constants will appear, $\lambda_i,\delta_i,\alpha_i,\mu_i$ in the three sections to come, which will not depend on $n,\rho$, but may implicitly depend on $k$.

\section{Structure of typical tracks}
\label{sec:track}

At the first stage, we want to understand typical values of statistics of the Markov chain capturing how added edges are utilized while disregarding the local diffusive effect.

Let us introduce notations for the graph and the Markov chain. 
Let $\mZ'_n$ consist of the vertices $\{0,1,\ldots,n-1\}$ and the edges of $\mZ_n$ augmented by the (random) extra edges, and $P$ be the transition probability matrix as described before driving the Markov chain $X_t, t=0,1,\ldots$
Let us denote by $h_j, j=1,\ldots,2k$ the random hubs on the cycle in a sorted order, $0\le h_1<h_2<\ldots$. 
Let $a_j, j=1,\ldots,2k$ be the lengths of the arcs on the cycle as it is split by the hubs with $a_j = h_{j+1}-h_j$, which is interpreted in $\mZ^+$. Note that this dual viewpoint will be appearing multiple times as we both handle a cycle and its perturbation (naturally on $\mZ_n$) and also distances traveled and similar quantities (naturally on $\mZ$).
The additional edges form a perfect matching among the hubs, to which we introduce an auxiliary orientation, this way we get the directed random perfect matching $e_i=(h_{m^-(i)}, h_{m^+(i)}), i=1,\ldots, k$, the hubs with indices $m^-(i)$, $m^+(i)$ getting connected. The directed matching does not change the transition probability structure originally described, but will allow statistics to be introduced to be well-defined. Also corresponding to the matching we define the pairing function $m(\cdot)$, simply by setting $m(m^-(i))=m^+(i)$ and vice versa.
Let $l_i, i=1,\ldots, k$ be the signed length of $e_i$ in terms of the cycle, taking values in $(-\frac n 2,\frac n 2]\subset \mZ$. A simple illustration can be seen in Figure \ref{fig:demolength}.
\begin{figure}[H]
  \centering
  \begin{subfigure}[b]{0.45\textwidth}
    \centering
      \includegraphics[width=0.7\textwidth]{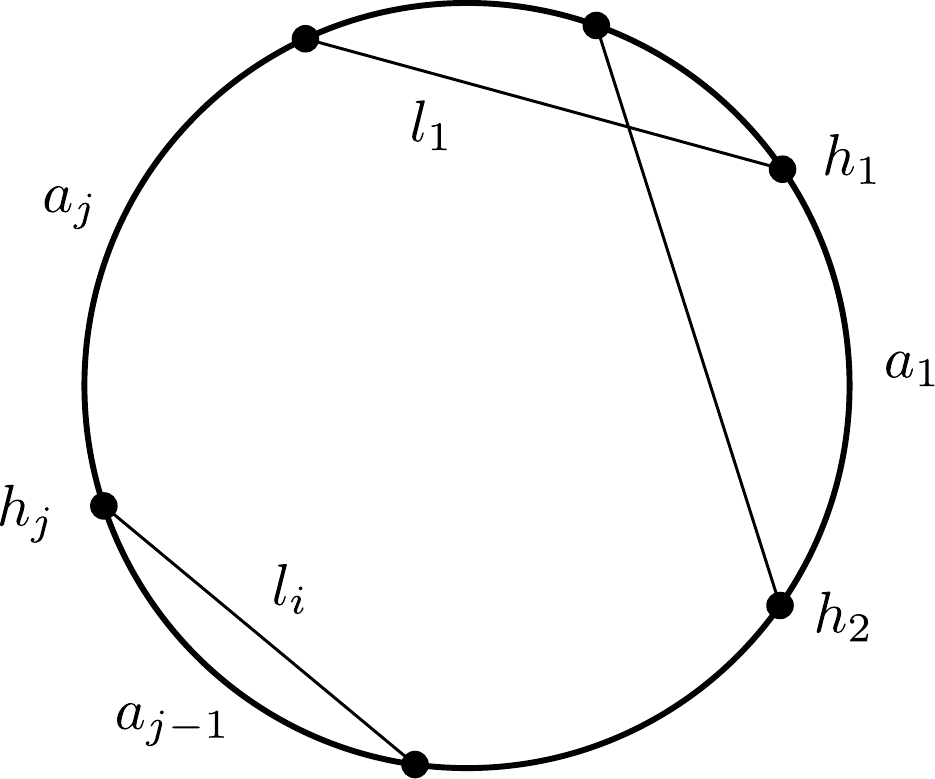}
        \caption{Length type variables.}
  \label{fig:demolength}
  \end{subfigure}
  \hfill
  \begin{subfigure}[b]{0.45\textwidth}
  \centering
      \includegraphics[width=0.7\textwidth]{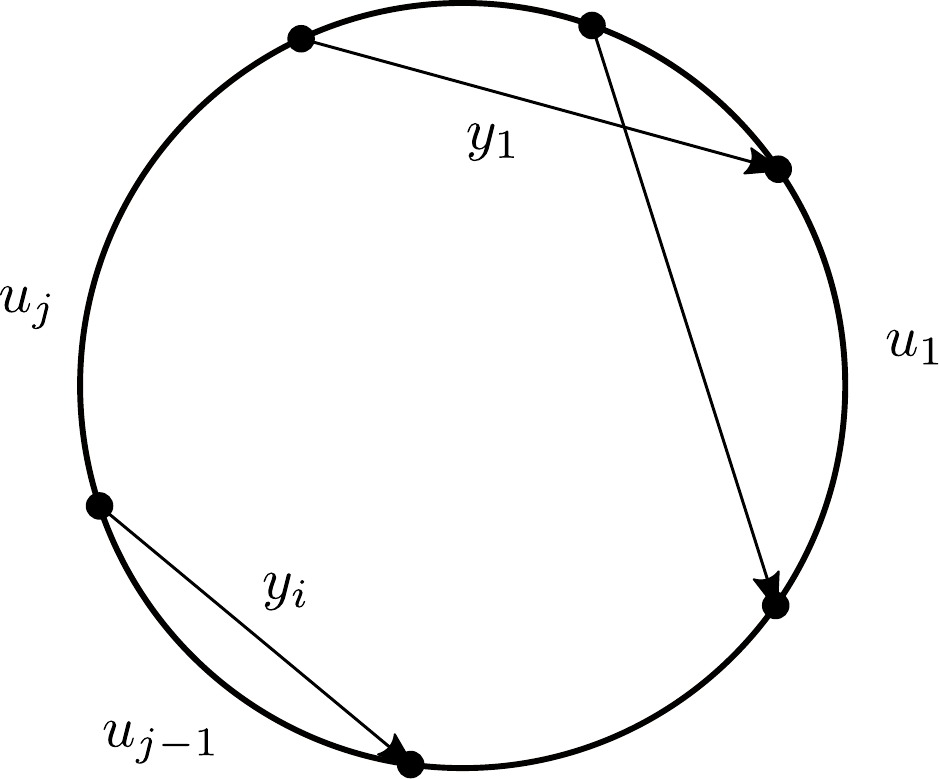}
    \caption{Utilization, local time type variables.}
  \label{fig:demousages}
  \end{subfigure}
  \caption{Illustration of variables as composed in our model.}
  \label{fig:demomodel}
\end{figure}
Let us introduce a scaling constant $0<\rho\leq 1$ that can be adjusted later. Our current goal is to understand the track passed by disregarding the diffusive effects, at a total distance $L=\rho \nka$. This will correspond to time $T=\frac{L}{p-q}$ in terms of the actual Markov chain as presented in Section \ref{sec:diffuse} where diffusive effects are taken into account. For the current purpose the process $\xi_l$ studied is defined as follows.

\begin{definition}
    Consider the universal lift $\tilde \mZ_n'$ of $\mZ_n'$ the lifting $\tilde X_t$ of $X_t$ on it, until a prescribed stopping time $\tau$ is reached. Create the loop erasure $\tilde \xi_l$ from $\tilde X_t$, then project it back to $\mZ_n$ to obtain $\xi_l$.
\end{definition}

In other words, $\xi_l$ can be interpreted as a local loop-erasure, small detours, loops are cancelled, but when an arc is fully passed between two hubs, it is kept. Specifying further stopping will happen when $L$ distance has been travelled. More precisely, for any $0\le t < t'$ we compose $\beta(t,t') = \sum_{s=t}^{t'-1} \beta_0(X_s,X_{s+1}) \in \mZ$, where $\beta_0(X_s,X_{s+1}) = X_{s+1}-X_s \in \{-1,0,1\}\subset \mZ$ when stepping along the cycle and for jumps we set $\beta_0(h_j,h_{m(j)})=0$ (similarly on the lift). Let $\tau = \min\{t:\beta(0,t)\geq L\}$.

The analysis will be cleaner if all the arcs included are passed in the forward direction, let us make sure this is the typical case. 
\begin{lemma}
\label{lm:preferableevents}
Consider the following preferable events:
\begin{align*}
\cB_1 &= \{a_i>\log^2 n,~ i=1,\ldots,2k\}\quad \textrm{--- concerning the graph},\\
\cB_2 &=\{\beta(t,s) > -\log^2 n,~ 0\le t<s\le n^2\}\quad \textrm{--- concerning the walk}.
\end{align*}
For these we have $\mP(\cB_1)\geq 1-\lambda_1 \log^2 n/n$ and $\mP(\cB_2)\geq 1-\exp(-\lambda_2\log^2 n)$ for appropriate $\lambda_1,\lambda_2>0$ as $n\to\infty$.
In particular, no full arc is backtracked once fully passed and thus $\xi_l, l=0,1,\ldots, L$ asymptotically almost surely (a.a.s.) properly represents the trajectory followed.
\end{lemma}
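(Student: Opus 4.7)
The plan is to handle the two probability bounds independently and then combine them to obtain the ``in particular'' clause.

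For $\cB_1$, the hubs $h_1,\ldots,h_{2k}$ form a uniform $2k$-subset of $\mZ_n$, and $\cB_1^c$ is the event that some two hubs lie within cyclic distance $\log^2 n$. Conditioning on one hub, each other hub lies in a forbidden window of at most $2\log^2 n$ positions among $n-1$ remaining candidates, so the collision probability for any fixed pair is at most $2\log^2 n/(n-1)$. A union bound over the $\binom{2k}{2}$ pairs yields the claimed $\lambda_1(k)\log^2 n/n$ tail.

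For $\cB_2$, the observation to record first is that the step contributions $\beta_0(X_s,X_{s+1})$ form an i.i.d.\ sequence with values $+1,-1,0$ and probabilities $p,q,1-p-q$: at every vertex the forward/backward probabilities are exactly $p,q$, while a matching-edge jump at a hub contributes $0$ to $\beta_0$ indistinguishably from a self-loop. Hence $S_t:=\beta(0,t)$ is a random walk on $\mZ$ with bounded increments and strictly positive drift $\mu=p-q$. The event $\cB_2^c$ coincides with $\{\max_{s\le n^2}Q_s\ge\log^2 n\}$, where $Q_s:=\max_{u\le s}S_u-S_s$ is the reflected-at-zero process driven by the i.i.d.\ increments. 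Loynes-type theory for drifted walks with bounded steps gives that $Q_s$ is positive recurrent with an exponential stationary tail $\mP(Q_s\ge x)\le C e^{-cx}$, where $c,C>0$ depend only on $p,q$. A union bound over $s\le n^2$ then yields
\[
\mP(\cB_2^c)\le Cn^2 e^{-c\log^2 n}\le \exp(-\lambda_2\log^2 n)
\]
for all $n$ sufficiently large.

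For the ``in particular'' clause, on $\cB_1\cap\cB_2$ every arc satisfies $a_i>\log^2 n$ while the backward range of the walk across any window of $[0,n^2]$ is strictly less than $\log^2 n$. Since $L=\rho n^{(k+2)/(k+1)}$ with $(k+2)/(k+1)\le 3/2$ for $k\ge 1$, and the drift is $\mu$, the stopping time $\tau$ is a.a.s.\ at most $2L/\mu\ll n^2$ (by the same drift concentration applied to $S_t$), so the two-sided control applies on the whole trajectory $X_0,\ldots,X_\tau$. Consequently, once the walk fully traverses an arc between two consecutive hubs, it cannot backtrack past the opposite hub, and $\xi_l$ faithfully records the coarse sequence of arc crossings. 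The main---quite modest---obstacle is the tail bound for $\cB_2$: the conceptual point is to recognise that the inhomogeneity at hubs is invisible to $\beta_0$, after which the estimate reduces to the standard reflected-walk argument above.
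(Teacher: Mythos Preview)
Your proof is correct and follows essentially the same route as the paper: a pairwise union bound for $\cB_1$, and for $\cB_2$ the exponential ruin/backlog bound for a biased walk combined with a union bound over the $n^2$ time horizon, absorbing the polynomial factor into the constant $\lambda_2$. Your explicit remark that the hub jumps contribute $0$ to $\beta_0$ and hence the increments are genuinely i.i.d., together with the check that $\tau\ll n^2$ a.a.s., makes fully explicit two points the paper leaves implicit, but the argument is otherwise the same.
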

\begin{proof}
    The distance between any two hubs is uniform, so by a union bound on the pairs we get $\mP(\cB^c_1)\leq \frac 12 (2k)^2 2\log^2n/n$.

    Thanks to the drift present, the event of backtracking can be viewed as a standard gambler's ruin situation with bias, providing an exponentially decaying probability in the distance. Thus starting from from any given time there is an $\exp(-\lambda_2\log^2 n)$ probability for a backtrack of $\log^2 n$ length for some $\lambda_2>0$. Taking a union bound on the generous $n^2$ time horizon taken to recover $\cB_2$ and slightly decreasing the constant $\lambda_2$ we arrive at the claim. 
\end{proof}
We want to understand the evolution of $\xi_l$ by itself. Whenever a hub $h_j$ is reached, two things can happen: either with probability $p_G^{(j)}$ eventually it goes(G) on along the cycle hitting the following hub $h_{j+1}$, or with probability $p_J^{(j)}$ eventually it jumps(J) and passes the arc on the other side of the extra edge (in the positive direction), hitting $h_{m(j)+1}$. In principle there is the third possibility of backtracking which is negligible on $\cB_1$ as seen above. We can approximate these probabilities based on the parameters $p,q,a$.
\begin{lemma}
\label{lm:pg_pj}
  On $\cB_1$ for the eventual probabilities to continue at hub $h_j$ we have
  \begin{align*}
  p_G^{(j)} &= p_G+O(\exp(-\lambda_2 \log^2 n)),  &p_G = \frac{p-q+a}{p-q+2a},\\
  p_J^{(j)} &= p_J+O(\exp(-\lambda_2 \log^2 n)), &p_J = \frac{a}{p-q+2a}.
\end{align*}
\end{lemma}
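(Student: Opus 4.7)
The plan is to execute a first-step analysis at a single hub that decomposes its ``next macroscopic event'' into elementary outcomes controlled by gambler's ruin, then close the computation through a short linear recursion induced by the matching involution $m(m(j))=j$.

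I would fix a hub $h_j$ and first describe one excursion starting there: in a single step the walk either goes forward (prob $p$), backward (prob $q$), jumps (prob $a$), or loops (prob $1-p-q-a$). The biased gambler's-ruin formula on an arc of length $a_j$, combined with $a_j, a_{j-1} > \log^2 n$ on $\cB_1$, gives $(1-q/p)/(1-(q/p)^{a_j}) = 1-q/p + O(\exp(-\lambda_2 \log^2 n))$ for reaching $h_{j+1}$ from position $h_j+1$ before returning to $h_j$, and an $O(\exp(-\lambda_2 \log^2 n))$ bound on the symmetric backward event. Iterating over returns to $h_j$ via geometric loop erasure at $h_j$, the three ``first new hub'' probabilities become
\[
    r := \frac{p-q}{p-q+a},\qquad s := \frac{a}{p-q+a},\qquad O(\exp(-\lambda_2\log^2 n))
\]
for the outcomes $h_{j+1}$, $h_{m(j)}$, $h_{j-1}$ respectively, with the first two carrying an $O(\exp(-\lambda_2\log^2 n))$ additive error. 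Note $r+s=1$.

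Next I would apply the strong Markov property at $h_{m(j)}$. By symmetry an identical three-way split governs the subsequent evolution, with roles $h_{m(j)+1}$, $h_j$, $h_{m(j)-1}$. Event $G$ therefore arises either directly at $h_j$ (contribution $r$) or after a round trip $h_j\to h_{m(j)}\to h_j$ followed by a successful restart (contribution $s^2 p_G^{(j)}$), while event $J$ arises after a jump followed by arc completion (contribution $sr$) or after a round trip followed by a successful restart (contribution $s^2 p_J^{(j)}$). This yields
\[
    p_G^{(j)} = r + s^2 p_G^{(j)} + O(\exp(-\lambda_2\log^2 n)), \qquad p_J^{(j)} = sr + s^2 p_J^{(j)} + O(\exp(-\lambda_2\log^2 n)).
\]
Solving and using $r=1-s$ gives $p_G^{(j)} = 1/(1+s) + O(\exp(-\lambda_2\log^2 n)) = (p-q+a)/(p-q+2a) + O(\exp(-\lambda_2\log^2 n))$ and analogously $p_J^{(j)} = s/(1+s) + O(\exp(-\lambda_2\log^2 n)) = a/(p-q+2a) + O(\exp(-\lambda_2\log^2 n))$, matching the claim.

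The main delicate point is that the exponentially small errors are generated both at $h_j$ and at $h_{m(j)}$, and then amplified by the factor $1/(1-s^2)$ coming from geometric resummation of the ``jump back and try again'' pattern; this is harmless because $s$ depends only on $p,q,a$, so $1-s^2$ is bounded away from $0$ uniformly in $n$ and in $j$, and every accumulated error remains of order $O(\exp(-\lambda_2\log^2 n))$. A secondary routine check is that the gambler's-ruin estimate applies uniformly across all hubs, which is immediate from the minimal arc-length bound $a_j>\log^2 n$ guaranteed by $\cB_1$.
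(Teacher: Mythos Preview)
Your proof is correct and takes essentially the same approach as the paper: a first-step analysis at the paired hubs $h_j,h_{m(j)}$, with biased gambler's ruin controlling the arc excursions and the involution $m(m(j))=j$ closing the recursion. The only cosmetic difference is that the paper first idealizes the arcs to copies of $\mZ$ (making $p_G+p_J=1$ exact) and appends the finite-arc error at the end, whereas you carry the $O(\exp(-\lambda_2\log^2 n))$ terms through the recursion; your intermediate quantities $r=\frac{p-q}{p-q+a}$ and $s=\frac{a}{p-q+a}$ are exactly those appearing in the paper's computation, and the algebra $p_G=r+s\,p_J,\ p_G+p_J=1$ versus your $p_G=r+s^2 p_G$ are equivalent.
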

\begin{proof}
  We consider $(h_j, h_{j'})$, $j'=m(j)$, a single pair of connected hubs and the vertices $b,b'$ one step ahead of them, respectively. We simplify calculations by assuming that both hubs are on a copy of $\mZ$ and calculate $p_G,p_J$ for this case. This will provide a good initial approximation thanks to $\cB_1$. The following transitions can occur:
  \begin{itemize}
  \item From $b$, eventually exiting to $+\infty$ with probability $(p-q)/p$ or returning to $h_j$ with probability $q/p$. Analogously for $b'$.
  \item From $h_{j}$, stepping to $b$ with probability $p$, to $h_{j'}$ with probability $a$. All other choices (including stepping backwards) simply cause looping, thus in total there is an eventual move with probability $p/(p+a)$ and $a/(p+a)$ to the two neighbors, respectively. Analogously for $h_{j'}$.
  \end{itemize}
  
  With reference to $h_j$ we get the event corresponding to $p_G$ if we step to $b$ and then exit to $+\infty$. Before we are allowed to take any number of detours to $b$ and back. When we happen to step to $h_{j'}$, we need the jumping event from that reference which allows to formulate the defining equations
\begin{align*}
  p_G &= \sum_{j=0}^\infty \left(\frac{p}{p+a} \frac q p\right)^j\left(\frac{p}{p+a}\frac{p-q}{p}+\frac{a}{p+a} p_J  \right),\\
  1 &= p_G+p_J.
\end{align*}
Combining and simplifying it reads:
$$
p_G = \frac{1}{1-\frac q {p+a}}\left(\frac{p-q}{p+a} + \frac{a}{p+a}(1-p_G) \right)
=\frac{p-q}{p-q+a} + \frac{a - a p_G}{p-q+a}.
$$
From here we get
$$
p_G = \frac{p-q+a}{p-q+2a}, \qquad p_J = \frac{a}{p-q+2a}.
$$
It remains to transfer the computation from the idealized double $\mZ$ to the finite arcs connected to $h_j,h_{j'}$. Thanks to $\cB_1$ these arcs have to be sufficiently long, so that they introduce only minor changes in the dynamics:
\begin{itemize}
    \item Exiting to $+\infty$ is modified to arriving to the next hub, thus removing a small positive probability of returning.
    \item Looping in the negative direction might include returning to the preceding hub with a small positive probability.
\end{itemize}
In both cases a change happens only if a full arc is backtracked, which are all of length at least $\log^2 n$, thus contributing with an error based on the comparison of hitting times $\mP(\tau_{-\log^2 n}<\tau_{+\infty}) \leq \exp(-\lambda_2 \log^2 n)$, confirming the claim.
\end{proof}

In contrast to the general step above one has to be careful near $\xi_0$ and $\xi_L$, but only a constant factor correction will appear.

Having seen how $\xi_l$ can evolve when arriving at a hub, we now want to get an overall picture. This is done by defining total counts of various decisions, then combined with Lemma \ref{lm:pg_pj} above this will allow us to identify a typical set of trajectories.

Let us denote by $N_{G,j}(L)$ the count of eventual $G$'s at $h_j$ up to distance $L$ (omitted when clear from the context), similarly for $N_{J,j}(L)$. This allows us to define
\begin{align*}
y_i &= N_{J,m^-(i)} - N_{J,m^+(i)}, \quad i=1,\ldots, k\\
u_j &= N_{G,j} + N_{J,m(j)}, \quad j=1,\ldots, 2k
\end{align*}
Observe that $y_i$ counts the signed total traffic through an extra edge while $u_j$ counts the instances of the arc $[h_j,h_{j+1})$ having passed through. See Figure \ref{fig:demousages} for illustration. The latter concept will be refined in a second to account for the process near $\xi_0,\xi_L$. The quantities are also interrelated by 
\begin{equation}
    \label{eq:y_to_u}
    \begin{aligned}
u_{j} &= u_{j-1} - y_i \qquad  \text{if }m^-(i)=h_j,\\
u_{j} &= u_{j-1} + y_i \qquad \text{if }m^+(i)=h_j.
    \end{aligned}
\end{equation}

We are interested in a meaningful lower bound for the probability of any
\[
y := (y_1,\ldots,y_k)\in Y := [-\lambda_3\sqrt{\rho}\nkd,\lambda_3\sqrt{\rho}\nkd]^k,
\]
to be realized for appropriate global constant $\lambda_3>0$. First we derive concentration relations in terms of $u:=(u_1,\ldots,u_{2k})$.

Let us fix the starting position $X_0=\xi_0$ of the Markov chain and thus the track.
Given a vector $y$ the final position can be calculated as
\begin{equation}
  \label{eq:xld_def}
\xi_L = \xi_0 + L + \sum_{i=1}^k y_i l_i \qquad \mod n
\end{equation}
that is, the $L$ step shift is modified by the jumps prescribed by the $y_i$ given.

Given $\xi_0, y$, and thus $\xi_L$, we can now slightly extend our arc usage quantities as there will be partially used arcs when leaving $\xi_0$ and when reaching $\xi_L$. If $\xi_0\in (h_j,h_{j+1})$ we introduce $u_j^-,u_j^+$ for the usage of $[h_j,X_0),[X_0,h_{j+1})$ respectively. Similarly, if for the given $\xi_0, y$ the endpoint $\xi_L$ is falling inside $(h_{j'},h_{j'+1})$, the usages $u_{j'}^-,u_{j'}^+$ are defined for the two parts there. During the upcoming development we will add comments where relevant how minor adjustments are needed for these split intervals.

Observe that in general $u_j^- +1 = u_j^+$ and $u_{j'}^- -1 = u_{j'}^+$, and the appropriate version has to be used in \eqref{eq:y_to_u}. If $\xi_0, \xi_L$ happen to fall on the same arc, it might be split in three parts, with all computations working analogously.

\begin{lemma}
\label{lm:ys_to_us}
Let $y\in Y$, then there exist corresponding integer vector $u\in (\mZ^+)^{2k}$ and $u_j^\pm$ at $\xi_0,\xi_L$ which are consistent with $y$ in terms of \eqref{eq:y_to_u} and are unique.
Also $\left|u_j-\rho\nkc\right|<\lambda_4\sqrt\rho \nkd$ for all $j=1,\ldots,2k$ and some $\lambda_4>0$, with the same error bound for $u_j^\pm$ where defined. 
\end{lemma}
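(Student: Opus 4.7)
The plan is to treat the recursion \eqref{eq:y_to_u}, augmented by the two unit offsets at the split arcs, as a linear system on the cyclic index $j$; show that its solution is determined up to one additive constant; fix that constant via the total arclength of the track; and finally read off the concentration bound. The one genuinely non-trivial step, and what I expect to be the main obstacle, is verifying that the resulting reference value $u_{j_0}$ comes out as an integer.

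Let $j_0$ and $j_1$ denote the indices of the arcs containing $\xi_0$ and $\xi_L$ (the case $j_0=j_1$ is handled analogously by splitting into three pieces). Choosing $u_{j_0}:=u_{j_0}^-$ as a reference and iterating \eqref{eq:y_to_u} around the cycle, the relations $u_{j_0}^+=u_{j_0}^-+1$ and $u_{j_1}^-=u_{j_1}^++1$ translate into extra offsets of $+1$ and $-1$ when crossing $h_{j_0+1}$ and $h_{j_1}$, respectively. Going once around, each $y_i$ appears at both $h_{m^\pm(i)}$ with opposite signs and the two unit offsets cancel, so the recursion is consistent: every $u_j$ is of the form $u_{j_0}+v_j(y)$ with integer $v_j$ satisfying $|v_j|\le k\lambda_3\sqrt\rho\nkd+1$, while $u_{j_0}^\pm,u_{j_1}^\pm$ differ from the corresponding $u_j$ by at most $1$.

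The constant $u_{j_0}$ is pinned down by the length bookkeeping: substituting the unit offsets into the total-length identity yields
\[
u_{j_0}\cdot n + \sum_j v_j a_j = L-a_{j_0}^+-a_{j_1}^-,
\]
an explicit rational formula for $u_{j_0}$. To verify integrality I plan to apply cyclic summation by parts,
\[
\sum_j v_j a_j = -\sum_i y_i\bigl(h_{m^+(i)}-h_{m^-(i)}\bigr) - h_{j_0+1}+h_{j_1},
\]
substitute $a_{j_0}^+=h_{j_0+1}-\xi_0$ and $a_{j_1}^-=\xi_L-h_{j_1}$, thereby reducing the numerator to $L+\xi_0-\xi_L+\sum_i y_i(h_{m^+(i)}-h_{m^-(i)})$, and then invoke \eqref{eq:xld_def} in the form $L+\xi_0-\xi_L\equiv -\sum_i y_i l_i\pmod n$. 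Since $l_i$ and $h_{m^+(i)}-h_{m^-(i)}$ differ only by integer multiples of $n$, the numerator is a multiple of $n$, so $u_{j_0}\in\mZ$. Uniqueness is automatic: the kernel of the recursion is one-dimensional (constants) and the length equation selects a single element.

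For the concentration bound, $L/n=\rho\nkc$, the corrections $a_{j_0}^+,a_{j_1}^-\le n$ contribute $O(1)$ after division by $n$, and $|\sum_j v_j a_j|/n\le k\lambda_3\sqrt\rho\nkd+1$ since $\sum_j a_j=n$. Combined with $|u_j-u_{j_0}|=|v_j|\le k\lambda_3\sqrt\rho\nkd+1$, this delivers $|u_j-\rho\nkc|<\lambda_4\sqrt\rho\nkd$ for a suitable $\lambda_4=\lambda_4(k,\lambda_3)$, and the split versions $u_j^\pm$ inherit the bound up to an additive constant absorbed into $\lambda_4$. Positivity $u_j\ge 1$ follows for $n$ sufficiently large since $\sqrt\rho\nkd=o(\rho\nkc)$.
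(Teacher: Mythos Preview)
Your proposal is correct and follows essentially the same approach as the paper: express all $u_j$ as a reference value plus an offset determined by the recursion \eqref{eq:y_to_u}, use the total-length identity $\sum a_j u_j = L$ to pin down the reference, invoke \eqref{eq:xld_def} to verify integrality, and read off the concentration. The only notable difference is that you spell out the integrality step via an explicit summation-by-parts computation, whereas the paper simply asserts that ``based on \eqref{eq:xld_def} the difference of the two sides is divisible by $n$''; your version makes that step transparent but does not change the argument.
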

\begin{proof}
We rely on $\sum_{j=1}^{2k^*} a_j u_j = L$, expressing that the coverage counted by arcs add up to the overall length. Here with a slight abuse of notation the summation to $2k^*$ refers to including $u_j^+, u_j^-$ terms if relevant, paired with the corresponding partial arc lengths.

Choose any favorite arc to be a reference, $\tilde u = u_{j_0}$ whose value is yet to be determined. By the condition on $y_i$ and iterating \eqref{eq:y_to_u} we get for all $j$
$$
u_{j} = \tilde u + O(\sqrt{\rho}\nkd),
$$
including $u_{j}^\pm$ where relevant and being consistent when arriving back as each $y_i$ is included both with positive and negative sign. We stress that we use the $O(\cdot)$ notation such that only constants independent of $n,\rho$ are allowed. Combining all the arcs leads to
$$
L = \sum_{j=1}^{2k^*} a_j u_j = n\tilde u + O(\sqrt{\rho} n^{1+\frac{1}{2k+2}}).
$$
Here based on \eqref{eq:xld_def} the difference of the two sides is divisible by $n$, in particular $\tilde u\in\mZ^+$.
Recalling the choice of $L = \rho\nka$ we arrive at
$$
\tilde u = \rho\nkc + O(\sqrt{\rho}\nkd),
$$
in line with the claim. Iterating \eqref{eq:y_to_u} again provides the conclusion for all $u_j$ (and $u_j^\pm$, where relevant).
\end{proof}

Now we are ready for bounding the probability of attaining the vectors $y$ in the concentrated region $Y$.
\begin{lemma}
    \label{lm:ytargetprob}
  Assume $\xi_0$ to be fixed and $\cB_1$ to hold. For any target $\hat y := (\hat y_1,\ldots,\hat y_k)\in Y$, the probability of the realization of this sequence can be bounded from below, i.e.\ for some constant $\lambda_6>0$
    $$\mP\big(y = \hat y \big) \ge \lambda_6 \rho^{-k/2}n^{-\frac{k}{2k+2}}.$$
\end{lemma}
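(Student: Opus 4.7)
The plan is to exploit the near-independence of G/J decisions at distinct hub visits. Given $\hat y\in Y$, Lemma \ref{lm:ys_to_us} uniquely determines the arc-usage vector $u$ with entries $u_j=\rho\nkc+O(\sqrt{\rho}\nkd)$, and these coincide (up to $O(1)$ boundary corrections near $\xi_0,\xi_L$) with the total number of hub visits: hub $h_j$ is visited $u_{j-1}$ times. On $\cB_1$, Lemma \ref{lm:pg_pj} guarantees that the G/J decision at each such visit is Bernoulli$(p_J)$ up to exponentially small corrections, independently across visits by the strong Markov property at the hubs.

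I would then decompose $\{y=\hat y\}$ as a disjoint union over the fine statistics $N_J=(N_{J,1},\dots,N_{J,2k})$ satisfying $0\le N_{J,j}\le u_{j-1}$ and $N_{J,m^-(i)}-N_{J,m^+(i)}=\hat y_i$ for $i=1,\dots,k$. By the factorization above, each such configuration has probability
\[
\prod_{j=1}^{2k}\binom{u_{j-1}}{N_{J,j}}p_J^{N_{J,j}}p_G^{u_{j-1}-N_{J,j}}
\]
up to negligible multiplicative errors. The $y$-constraint imposes $k$ independent linear equations on $N_J\in\mZ^{2k}$, so the admissible set is a $k$-dimensional lattice coset.

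For the lower bound, I would restrict attention to $N_J$ in the bulk $\{|N_{J,j}-p_J u_{j-1}|\le C\sqrt{u_{j-1}}\}$ for a sufficiently large constant $C$. Stirling's formula gives each Binomial pmf in the bulk bounded below by $c/\sqrt{u_{j-1}}$, so every bulk configuration contributes at least a constant times $\prod_j u_{j-1}^{-1/2}\asymp u^{-k}$. The bulk box intersected with the affine slice contains $\asymp u^{k/2}$ lattice points (parameterize the kernel by $k$ free coefficients, each ranging in an interval of length $\asymp\sqrt{u}$); the constraint $\hat y\in Y$ with $\lambda_3$ chosen small enough relative to $C$ ensures the slice meets the bulk box in a full-dimensional region. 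Multiplying,
\[
\mP(y=\hat y)\ge\text{const}\cdot u^{k/2}\cdot u^{-k}=\text{const}\cdot u^{-k/2}\asymp\rho^{-k/2}n^{-k/(2k+2)},
\]
matching the claim.

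The main technical obstacle is justifying the conditional independence cleanly: the hub visit counts $u_{j-1}$ are themselves determined by the decisions, so extracting the clean Binomial factorization requires care. One safe route is to condition on the ordered sequence of hub visits (under which the decisions are genuinely independent Bernoulli$(p_J)$) and then sum over orderings compatible with $u$; alternatively, one may view the process as an abstract Markov chain on $\{h_1,\ldots,h_{2k}\}$ and invoke a multidimensional local CLT. The boundary effects at $\xi_0,\xi_L$ alter $O(1)$ visit counts and modify the final bound only by a $\rho$-independent constant factor, absorbed into $\lambda_6$.
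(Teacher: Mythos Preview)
Your approach coincides with the paper's: factorize over Bernoulli G/J decisions at the hubs, apply a local CLT to the resulting Binomial counts, and sum over a $k$-dimensional free parameter (your kernel coordinates are the paper's $\hat r\in R$). The obstacle you flag---that the visit counts $u_{j-1}$ depend on the decisions themselves---is precisely what the paper resolves, via a bitstream construction: install an infinite i.i.d.\ Bernoulli$(p_J)$ sequence $B_{j,\cdot}$ at each hub, define the events $A_i(\hat r_i)$ on these streams at the \emph{fixed} horizons $\hat u_{j-1}$ (so they are genuinely independent), and argue by contradiction that on $\bigcap_i A_i(\hat r_i)$ the walk reads exactly $\hat u_{j-1}$ bits at $h_j$ (the first over-read would force an earlier over-read); this is cleaner than either of your suggested routes and sidesteps summing over orderings entirely.
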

\begin{proof}
At first let us confirm corner cases do not cause a disruption in the analysis. The event of an arc being backtracked, in the spirit of (a looser form of) $\cB_2$ has subpolynomial probability. From $\xi_0\in (h_j,h_{j+1})$ for convenience we prefer to hit $h_{j+1}$ which has probability bounded away from 0 thanks to the drift. At $\xi_L$ the probabilities of $G,J$ might be different thanks to the early halt, but both are bounded away from 0. This means an extra multiplicative factor is sufficient to correct for these cases, we just simply need to adjust $\lambda_6$ at the end.

Now for the overall track, we bound the probability using a new realization of the same distribution.
At each hub $h_j$, install an infinite source $B_{j,\cdot}$ of i.i.d.\ $0-1$ bits with probability $p^{(j)}_G,p^{(j)}_J$, corresponding to $G$ (go ahead), $J$ (jump) as before.

To see the correspondence, start a track from $\xi_0$ in the positive direction, until hitting some $h_j$. Read the first bit there, i.e. $B_{j,1}$ and continue along the cycle in place (if 0 for $G$) or jump across and continue there (if 1 for $J$) depending on the bit. Carry on this process, always reading the next new bit at the hub when arrived, until length $L$ is reached. In fact, this can be viewed as a natural interlacement of the bitstreams. This representation also shows that the number of queries at $h_j$ is determined by the usage of the preceding arc leading to it, $[h_{j-1},h_j)$. Without the interlacement, they form independent sequences, which is the property to be exploited.
For the overall counts let us introduce
\begin{align*}
\tilde N_{J,j}(d) &= \sum_{s=1}^d B_{j,s}, &
\tilde N_{G,j}(d) &= d- \sum_{s=1}^d B_{j,s},
\end{align*}
for the number of $J$ and $G$ bits respectively at $h_j$ during the first $d$ bits.

From the specified $\hat y$ and $\xi_0$ we can compute $\hat \xi_L$ and the corresponding $\hat u = (\hat u_1,\ldots,\hat u_{2k})$ by the means of \eqref{eq:xld_def} and Lemma \ref{lm:ys_to_us}.
Using these, for all $i,j$ define the following (parametric) events that capture the coupling to the random track:
\begin{align}
    A_i(\hat r_i) &= \{\tilde N_{J,m^-(i)}(\hat u_{m^-(i)-1}) = \hat r_i+\hat y_i\}
         \cap \{\tilde N_{J,m^+(i)}(\hat u_{m^+(i)-1}) = \hat r_i\}. \label{eq:Asets}
\end{align}
Here $A_i(\hat r_i)$ expresses a possibility to get $\hat y_i$ signed total usage on the edge $e_i$ until checking the bitstreams until different, but prescribed horizons associated to the predicted readings. As such, for the variables in their definition, use $\hat u^+_j$ when available.

Assume the event $\bigcap_{i=1}^k A_i(\hat r_i)$ and consider the realisation of $\xi_l$ following the bits given by $B_{j,\cdot}$ as described above until reaching $\xi_L$.

The potential issue could be that the number of bits read by the walk does not match the horizon to which $A_i(\hat r_i)$ was defined a priori. We will show this cannot happen, arguing by contradiction.

If there were streams read more than prescribed while following $\xi_l$, let us focus on the very first moment a bit would be read further than planned, say at hub $h_j$. The number of bit readouts is determined by the usage of the arc before, $[h_{j-1},h_j)$, which thus must have already passed $\hat u_{j-1}+1$ to trigger this unfortunate situation.

How could this usage be built up? Partially from $h_{j-1}$ and going on and from $h_{m(j-1)}$ and jumping. But even if we read all bits that were planned there ($\hat u_{j-2}$, $\hat u_{m(j-1)-1}$ respectively), it will provide only a total of $\hat u_{j-1}$ usage, thanks to the connection \eqref{eq:y_to_u} and the respective event $A_i(\hat r_i)$. Consequently for that extra 1 pass of the arc, we must have already read further than the horizon at $h_{j-1}$ or $h_{m(j-1)}$, contradicting that we are about to reveal the very first such bit.

Note that $h_j$ and $h_{m(j-1)}$ might coincide but it doesn't affect the argument. Near $\xi_0$, the same logic works with $u_j^\pm$ while the splitting at $\xi_L$ should not be introduced until having arrived there.

Combining with the fact that $L = \sum_{j=1}^{2k^*} a_j \hat u_j = \sum_{j=1}^{2k^*} a_j u_j$ for the prescribed and the realized usages while $\hat u_j \le u_j$ from the previous argument, we must have $\hat u_j = u_j$ for all $j$, including for the split arcs.

We now turn to estimate the probability of
$\bigcap_{i=1}^{k} A_i(\hat r_i)$ having confirmed that it matches the event of fully realizing $\hat y$ together with an internal parameter vector $\hat r=(\hat r_1,\ldots,\hat r_k)$. We will do the estimation for a range of $\hat r$, each chosen from %
\begin{equation}
  \label{eq:rrange}
R := [p_J\rho\nkc-\lambda_5\sqrt{\rho}\nkd,p_J \rho\nkc+\lambda_5\sqrt{\rho}\nkd]^k,
\end{equation}
with arbitrary $\lambda_5>0$.

In the current construction however, all $A_i(r_i)$ are independent, and by the above claim it is sufficient to bound their probability. They are all the products of two independent events on Binomial distributions. 
Using the choice of $\hat r_i,\hat y_i$, the bound of Lemma \ref{lm:ys_to_us} on $u_{m^-(i)}=\rho\nkc+\sqrt\rho O(\nkd)$ together with Lemma \ref{lm:pg_pj} for the Binomial probabilities we can write
$$
|p_J^{(m^-(i)-1)} \hat u_{m^-(i)-1}-(\hat r+\hat y_i)|\le \lambda_4\sqrt\rho\nkd + \lambda_2' \rho \nkc e^{-\lambda_2\log^2n}
\le \lambda_4'\sqrt\rho\nkd
$$
for $n$ large enough, and similarly around $u_{m^+(i)-1}$.
Stressing that $p^{(j)}_J$ are global constants up to subpolynomial error, a crude Local Central Limit approximation allows us to bound
\begin{equation}
  \label{eq:rbound_single}
  \begin{aligned}
    \mP(A_i(\hat r_i))&\ge \left(\frac {\lambda_5'} {\sqrt{\rho}\nkd}\right)^2= \lambda_5'^2\rho^{-1}\nkmc,\\
    \mP\left(\bigcap_{i=1}^{k} A_i(\hat r_i)\right)&\ge \lambda_5'' \rho^{-k} n^{-\frac k {k+1}}.
  \end{aligned}
\end{equation}
In order to get a lower bound primarily for $\hat y$, we can sum up for the range of $\hat r\in R$, which correspond to disjoint events. In the end we get
$$
\sum_{\hat r \in R} \mP\left(\bigcap_{i=1}^{k} A_i(\hat r_i)\right) \ge (2\lambda_5\sqrt{\rho}\nkd)^k \lambda_5'' \rho^{-k}n^{-\frac k {k+1}}=: \lambda_6\rho^{-k/2}n^{-\frac k {2k+2}},
$$
confirming the claim of the lemma.
\end{proof}

The bound obtained is exactly of the order sought after, both in terms of $n$ and also in the parameter $\rho$, as it inversely matches the number of $\hat y$ available up to constants.

\begin{remark}
    While generating the track using the bitstreams note that the extra $-1$ at $\xi_L$ is the key to freeze to process at the appropriate $\hat u$. If we had removed that, we would have a first extra bit read at $h_{j'+1}$ if $\xi_L\in [h_{j'},h_{j'+1})$, from where all $u$ and all bitstream reads would obviously diverge.
\end{remark}

\section{Spread of typical tracks}
\label{sec:spread}
Denote by $\xi_{L,y}$ be the position of $\xi_L$ for given $y = (y_1,\ldots,y_k)$. Fix $L$ as before and $\xi_0=0$ and consider the collection of points $\Xi_m = \{\xi_{L, (y_1,\ldots,y_k)} \;|\; 0 \leq y_i \leq m , 1\leq i \leq k\}$ for some $0<m<\infty$.  Lemma \ref{lm:ytargetprob} implies that $P(\xi_L\in \Xi_{m}) >0$ and $\Xi_m$ has the good property that the realization of any point inside is bounded in the same order for any $m = \Theta(\sqrt{\rho}\nkd)$. In line with the previous section, instead of focusing on every possible position of $\xi_L$, we can limit our attention to $\Xi_m$. If for some $m = \Theta(\sqrt{\rho}\nkd)$ we show that $\Xi_m$ is spread out evenly on the cycle $\mZ_n$ then we can reasonably conclude that the range of $\xi_L$ cannot be very clustered either. From now on, unless specified we consider $m$ to be of this order.

Here we set up some notations. Define $Y^+_m = \{(y_1,...,y_k)\;|\; 0\leq y_i < m, y_i\in \mathbb{Z}\}$. Define a set $\mathcal{Y}_m$ that contains all coordinate-wise sign flip of $Y_m^+$. Formally we denote $b = (b_1,\ldots,b_k)\in\{-1,1\}^k$ a sign pattern, then write
\begin{align*}
    Y^b_m &= \{(b_1y_1,\ldots,b_ky_k) \mid 0\leq y_i < m, y_i\in \mZ\},\\
    \mathcal{Y}_m &= \{Y^b_m \mid b\in \{-1,1\}^k\}.
\end{align*}
To better understand the structure of this set and what follows, we illustrate in Figure \ref{fig:Ymb} what $Y_m^b\in \cY_m$ represent when $k=3, b=(b_1,b_2,b_3) = (-1,1,-1)$.
\begin{figure}[hbt!]
  \centering
  \includegraphics[scale=0.15]{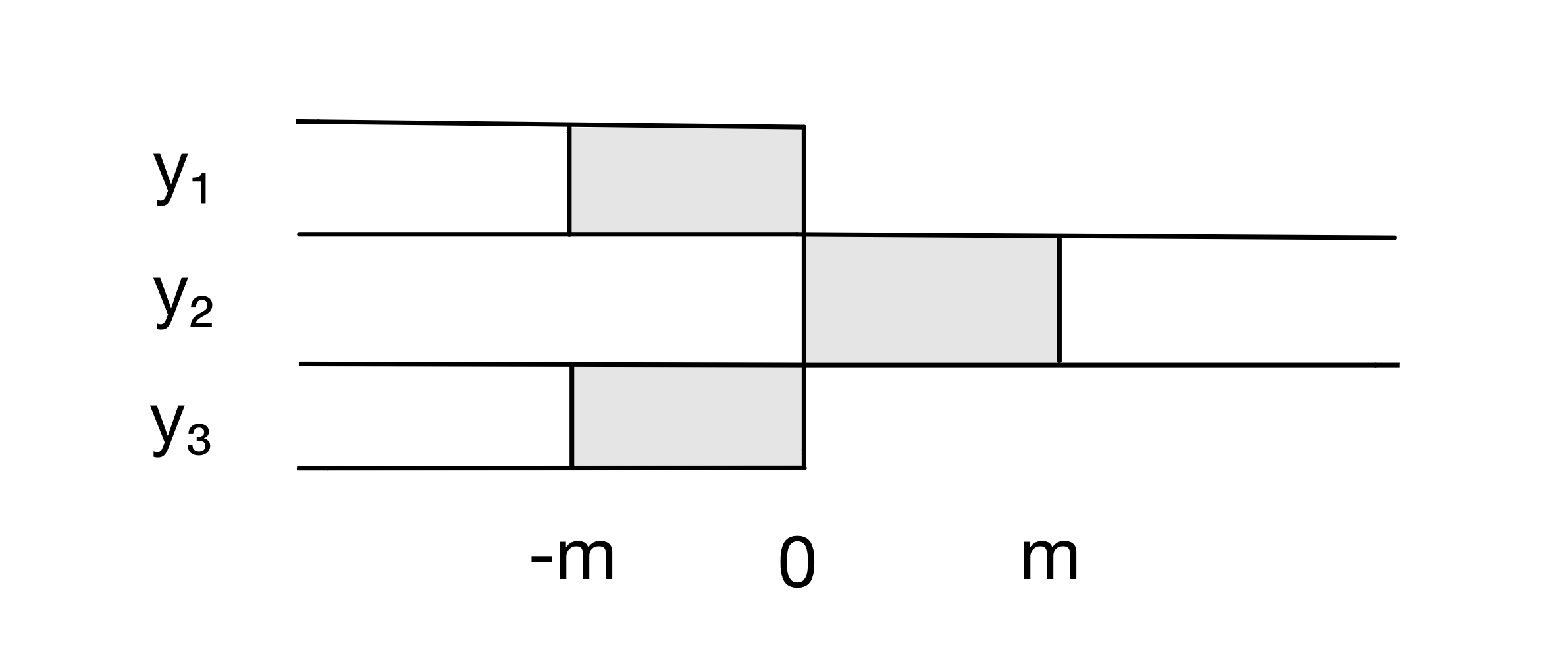}
  \caption{The shaded areas are the possible values that $y_1,y_2,y_3$ can take for $Y_m^{(-1,1,-1)}$}
  \label{fig:Ymb}
\end{figure}
Recall that $l = (l_1,\ldots,l_k)$ stands for the length of the added edges. Given $l$, define the function $f_l: \mZ^{k} \to \mZ_n = \{0,1,\ldots,n-1\}$ as 
\begin{align*}
    f_l\big((y_1,\ldots,y_k)\big) = \sum_{i=1}^{k} y_il_i \quad (mod\; n)
\end{align*}
We may naturally extend $f_l$ also to subsets of $\mZ^k$ by elementwise application.
Note that by construction 
$    \xi_{L,y} = f_l(y) + L ~(mod\; n).$
Therefore, our strengthened goal for now is to prove that $f_l(Y_m^+)$ is spread out evenly on $\mZ_n$. 
We use the original graph distance of $\mZ_n$ to quantify this, i.e. let the distance for $0\leq x\leq x' < n$ to be $d(x',x)=d(x,x') = \min(x'-x, n-x'+x)$.
Note that the cardinality of $Y_m^+$ is $|Y_m^+| = \Theta(m^k)$. Thus we would expect that for every point in $f_l(Y_m^+)$, there should be close-by points on both sides with distances in the order $\Theta(\frac{n}{m^k})$ for it to spread out evenly. Having this target we denote 
 \begin{align}
 \label{eq:standarddistance}
    s = \frac{n}{m^k} 
 \end{align}
 as the "standard distance". One important observation is that for $y,y'\in \mZ^k$, 
 \begin{align}
     d(f_l(y),f_l(y')) = d(f_l(y-y'),0)
 \end{align}
 by linearity of $f_l$. We will use this property in the following proofs without further explanation. In the following lemmas, we consider $m = \Theta(\sqrt{\rho}\nkd)$, and exploit the randomness of $l$. Observe that it can be coupled with i.i.d.\ uniform coordinates with $o(1)$ exception probability. This is present due to hubs being single-use, but can be disregarded anyway in comparison with the constant exception probabilities we are about to analyze.

\begin{lemma}
\label{not too close lemma} 
For any $0<\delta_1<1$ as exception probability, there exists $\alpha_1>0$ such that with probability at least $1-\delta_1$, for all $Y_m \in \cY_m$ and for all $y,y'\in Y_m$ we have $d(f_l(y),f_l(y'))>\alpha_1 s$. In other word, with probability arbitrarily close to 1, the distance between any two points in any $f(Y_m)$ should be larger than the order of standard distance $s$.
\end{lemma}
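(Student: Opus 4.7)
My plan is to reduce the claim to a single-offset concentration estimate via linearity, and then to apply a union bound that exploits the (approximately) i.i.d.\ uniform distribution of the edge lengths $l=(l_1,\ldots,l_k)$ on $\mZ_n$ noted just before the lemma. By linearity of $f_l$ we have $d(f_l(y), f_l(y')) = d(f_l(y-y'),0)$. For any sign pattern $b\in\{-1,1\}^k$ and any $y,y'\in Y_m^b$, writing $y_i=b_i a_i$ and $y_i'=b_i a_i'$ with $0\le a_i,a_i'<m$, the difference satisfies $(y-y')_i = b_i(a_i-a_i') \in [-(m-1),m-1]$, independently of $b$. Hence it is enough to show that, with probability at least $1-\delta_1$,
\[
    d(f_l(z),0) > \alpha_1 s \quad \text{for every nonzero } z\in [-(m-1),m-1]^k\cap\mZ^k,
\]
which simultaneously handles every $Y_m\in\cY_m$ and every pair $y,y'$.

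For the single-$z$ bound, I would couple $l$ to an i.i.d.\ uniform sample on $\mZ_n$ (losing only $o(1)$ in the exception probability). For fixed nonzero $z$, the sum $f_l(z)=\sum_i z_i l_i \pmod n$ is uniformly distributed on the cyclic subgroup $g\mZ_n\subset\mZ_n$, where $g=\gcd(z_1,\ldots,z_k,n)$; this subgroup has $n/g$ points spaced $g$ apart, so at most $2\alpha_1 s/g +1$ of them lie within distance $\alpha_1 s$ of the origin, each of mass $g/n$. Therefore
\[
    \mP\bigl(d(f_l(z),0)\le \alpha_1 s\bigr) \;\le\; \frac{2\alpha_1 s + g}{n}.
\]

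The final step is a union bound over all $z\ne 0$ in the box. Using the identity $s\,m^k=n$, the first term contributes at most $(2m)^k\cdot 2\alpha_1 s/n = 2^{k+1}\alpha_1$, which can be made arbitrarily small by choosing $\alpha_1$ small. For the second, I would group the $z$ by the divisor $d=\gcd(z,n)$ and use $g=\sum_{d\mid g}\phi(d)$ to obtain
\[
    \sum_{z\ne 0} g_z \;\le\; \sum_{d\mid n,\ d\le m-1}\phi(d)\,(2m/d+1)^k,
\]
which standard divisor-sum estimates bound by $O(m^k)$ for $k\ge 3$, $O(m^2\log m)$ for $k=2$, and $O(m^2)$ for $k=1$; since $m=\Theta(\sqrt{\rho}\,n^{1/(2k+2)})$ in every case this is $o(n)$, so the second contribution vanishes as $n\to\infty$. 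Choosing $\alpha_1<\delta_1/2^{k+2}$ and $n$ large then gives total exception probability below $\delta_1$. The main obstacle is precisely the diophantine contribution from offsets $z$ whose coordinates share a large common factor with $n$: on such $z$ the image $f_l(z)$ is supported on a sparser sublattice of $\mZ_n$ and may concentrate near $0$, so one cannot simply treat $f_l(z)$ as uniform on $\mZ_n$. The divisor-sum estimate is the mechanism that tames this aggregate effect and leaves only the benign $2^{k+1}\alpha_1$ term to be controlled by the choice of $\alpha_1$.
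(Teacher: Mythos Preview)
Your proposal is correct and follows essentially the same route as the paper: reduce via linearity to nonzero offsets $z$ in the box $[-(m-1),m-1]^k$, bound $\mP(d(f_l(z),0)\le\alpha_1 s)\le(2\alpha_1 s+g)/n$ with $g=\gcd(z,n)$, and take a union bound. The only difference is that your divisor-sum analysis of $\sum_{z}g_z$ is more elaborate than needed---the paper simply observes that $g\le m$ (since $g$ divides some nonzero coordinate $|z_i|<m$), so the $g$-correction contributes at most $(2m)^k\cdot m/n=O(m^{k+1}/n)=o(1)$ directly, without any case distinction on $k$.
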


\begin{proof}
First suppose for technical simplicity $n$ is a prime number. Using the i.i.d.\ uniform random variables $l_1,\ldots,l_k$ on $\mZ_n$ we have 
$y_il_i$ uniform as well unless $y_i=0$ and clearly also for their convolution $\sum_{i=1}^{k} y_il_i ~(mod\;n)$ as long as there is at least one nonzero $y_i$ coordinate, in other words unless we have $y=(0,\ldots,0)$.

Consider the point set $Y_m' = \{(y_1,\ldots,y_k) \mid -m \leq y_i \leq m)\}$. By the property we found above, $\forall y \in Y'_m \setminus \{(0,0,\ldots,0)\}$, $f_l(y)$ is a uniform random variable in $\mZ_n$. Note that $|Y'_m| = (2m+1)^k$ , so $f(Y'_m\setminus 0)$ is a collection of $(2m+1)^k-1$ dependent uniform random variables. Choose $0<\delta_1'<\delta_1$ and let us identify a target neighborhood of $0$ to avoid as follows: 
$$
A = \left\{x\in \mZ\;|\; d(0,x)\leq \frac{\delta_1'}{2^{k+1}}s\right\}.
$$
Checking for the possible $y$, let $I_i$ be the indicator function of $\{f_l(y(i))\in A\}$ for $y(i) \in Y'_m \setminus \{(0,0,\ldots,0)\}$ as $1\leq i \leq (2m+1)^k-1$.
Therefore, 
\begin{equation}
\label{eq:eofI}
\mE[I_i] \leq \frac 1n\left(\frac{2\delta_1's}{2^{k+1}}+1\right).    
\end{equation}
Let $N_1$ to be the total number of $y(i)$ that fall in $A$. Then $\mE[N_1] = \sum_{i=1}^{(2m+1)^k-1} \mE[I_i] = \delta_1' +o(1)$. Thus, 
\begin{align*}
    \mE[N_1] \geq P(N_1\geq 1) = 1-P(N_1=0) \implies P(N_1=0)\geq 1-\delta_1+o(1).
\end{align*}
Let $\alpha_1 = \frac{\delta_1'}{2^{k+1}}$. Note that if $P(N_1=0)$, $\forall Y_m \in \cY_m$, $\forall y\geq y'\in Y_m$, there always exists $y'' = y-y' \in Y'_m$ such that $d(f(y),f(y')) = d(f(y''),0)>\alpha_1s$. Therefore, with probability at least $1-\delta_1$, $d(f(y),f(y')) = d(f(y''),0) >\alpha_1s$.\\

Now let us extend the proof for general $n$. Our aim is to bound $\mE[I_i]$ in a similar way, from where the rest follows. Given a specific choice of $y\in Y_m'\setminus \{(0,0,\ldots,0)\}$ the sum $\sum_{i=1}^{k} y_il_i (mod\;n)$ becomes a uniform random variable on $g\mZ_n$, i.e. $\mZ_{n/g}$ embedded into $\mZ_n$,
with $g = gcd(y_1,y_2,...,y_k,n)\leq m$ (e.g., by checking their Discrete Fourier Transform). Thus we encounter a similar situation on a smaller cycle on $n'=n/g$ vertices by scaling. The only change caused to $\eqref{eq:eofI}$ is replacing the $+1$ term by $+g$.

However, as long as $\frac{2\delta_1's}{2^{k+1}} \gg g$, this change can be neglected by adjusting $\delta_1'$ (towards the target $\delta_1$). Referring to the the definition of $s$ in \eqref{eq:standarddistance} and the $Y_m'$ considered above, this condition is equivalent to $\nkc \gg m$, which is comfortably satisfied within the framework set at the beginning of the section.
\end{proof}

Let us note that by the same reasoning the distances can be controlled when $y$ has inhomogeneous constraints on its coordinates, stated as follows:
\begin{corollary}
    \label{not too close lemma addon}
    For arbitrary constants $0<\eps_i<1, i=1,\ldots,k$ we may consider $Y_{\eps_1m,\ldots,\eps_km} = \{(y_1,\ldots,y_k) \mid 0\leq b_iy_i < \eps_im\}$. Then with the same $\delta_1,\alpha_1$ as in Lemma \ref{not too close lemma} with probability at least $1-\delta_1$ we have that for all possible sign pattern $b$ and $y,y'\in Y_{\eps_1m,\ldots,\eps_km}$ there holds $d(f_l(y),f_l(y'))>\alpha_1 s/\prod_{i=1}^k \eps_i$.
\end{corollary}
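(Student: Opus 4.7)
The plan is to repeat the proof of Lemma \ref{not too close lemma} with the single parametric adjustment that the target neighbourhood around $0$ is enlarged by the factor $1/\prod_{i=1}^k \eps_i$. The guiding intuition is that restricting each coordinate window by $\eps_i$ shrinks the relevant difference-set cardinality by $\prod_{i=1}^k \eps_i$, so the forbidden neighbourhood can be stretched by exactly the same factor while keeping the expected count of bad points unchanged.

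First I would reduce the claim to a statement about differences living in a single set that does not depend on the sign pattern. For any $b\in\{-1,1\}^k$ and any $y,y'\in Y^b_{\eps_1 m,\ldots,\eps_k m}$, the coordinate-wise difference $z = y-y'$ satisfies $|z_i|\leq \eps_i m$, hence lies in
\[
    Y' := \{(z_1,\ldots,z_k)\in\mZ^k : |z_i|\leq \eps_i m,\ i=1,\ldots,k\}.
\]
Since $Y'$ is independent of $b$ and $f_l$ is linear, it is enough to show that with probability at least $1-\delta_1$ no nonzero $z\in Y'$ has $f_l(z)$ within distance $\alpha_1 s/\prod_{i=1}^k \eps_i$ of $0$; the conclusion for all sign patterns then follows without any extra union bound.

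Next I would replay the counting argument of Lemma \ref{not too close lemma}. For $z\in Y'\setminus\{0\}$ the Discrete Fourier Transform check shows that $f_l(z)$ is uniform on $g\mZ_n$ with $g = \gcd(z_1,\ldots,z_k,n)\leq \max_i \eps_i m \leq m$. Fix $\delta_1'<\delta_1$, set $\alpha_1 = \delta_1'/2^{k+1}$, let $A = \{x\in\mZ_n : d(0,x)\leq \alpha_1 s/\prod_{i=1}^k \eps_i\}$, and let $N_1$ count the nonzero $z\in Y'$ with $f_l(z)\in A$. The analogue of \eqref{eq:eofI} together with $|Y'|\leq \prod_{i=1}^k(2\eps_i m+1) = 2^k m^k \prod_{i=1}^k \eps_i\,(1+o(1))$ yields
\[
    \mE[N_1] \leq (|Y'|-1)\cdot \frac{1}{n}\left(\frac{2\alpha_1 s}{\prod_{i=1}^k \eps_i}+g\right) = 2^{k+1}\alpha_1 + o(1) = \delta_1'+o(1),
\]
where the $\prod_{i=1}^k \eps_i$ factor in $|Y'|$ cancels the $1/\prod_{i=1}^k \eps_i$ factor in $|A|$, recovering exactly the expected count of the homogeneous case. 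Markov's inequality then gives $\mP(N_1 = 0)\geq 1-\delta_1$ after a final small adjustment of $\delta_1'$.

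The one routine point to verify is that the $g$-correction is still absorbed into $o(1)$. The $g$-contribution to $\mE[N_1]$ is bounded by $(|Y'|-1)g/n = O(m^{k+1}\prod_{i=1}^k \eps_i / n)$; since $\prod_{i=1}^k \eps_i$ is a fixed constant and $m = \Theta(\sqrt{\rho}\nkd)$ yields $m^{k+1}/n = O(\rho^{(k+1)/2}n^{-1/2}) = o(1)$, the correction is negligible just as in Lemma \ref{not too close lemma}. The only conceivable obstacle, namely a separate treatment of the $2^k$ sign patterns, is sidestepped once and for all by the observation that the difference set $Y'$ is common to every $b$.
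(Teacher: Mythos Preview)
Your proposal is correct and is precisely the ``same reasoning'' the paper alludes to in the sentence preceding the corollary (the paper gives no separate proof): shrink the difference box to $Y'=\{|z_i|\le \eps_i m\}$, enlarge the forbidden neighbourhood by $1/\prod_i\eps_i$, and observe that the two factors cancel in the expected count so the same $\alpha_1,\delta_1$ work. Your remark that $Y'$ is independent of the sign pattern $b$, so no extra union bound over $b$ is needed, is the one point worth making explicit and you handle it cleanly.
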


Now we have the property that no two points can be very close to each other, within each of the mapped sets in $\cY_m$. As a next step, we can prove that there is a nonzero mapped point that falls close to 0 on an appropriate scale.

\begin{lemma}
\label{one side lemma} 
For any $ 0<\delta_2<1$ as exception probability, there exists $\alpha_2<\infty$ such that with probability at least $1-\delta_2$, for all $Y_m\in \cY_m$, there exists $0 \neq y \in Y_m$ such that $\alpha_1s <d(f(y),0) \leq \alpha_2 s$. In other word, with probability 1, for all possible $Y_m$, there exists a non-zero point $y$ in $Y_m$ such that the distance between $f(y)$ and $0$ is in the order of the standard distance $s$.
\end{lemma}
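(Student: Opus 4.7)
The plan is to lower-bound the number of nonzero points of $f_l(Y_m^b)$ that land in a ball of radius $\alpha_2 s$ around $0$ via a second-moment method, then union-bound over the $2^k$ sign patterns. The lower bound $d(f_l(y),0) > \alpha_1 s$ is immediate from Lemma \ref{not too close lemma} applied with $y' = 0$ (since $f_l(0) = 0$), so only the upper bound $\le \alpha_2 s$ requires new work.

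Fix $b \in \{-1, 1\}^k$, write $Y_m = Y_m^b$, and for a constant $\alpha_2$ to be chosen set
$$B = \{x \in \mZ_n : d(x, 0) \le \alpha_2 s\}, \qquad N = |\{y \in Y_m \setminus \{0\} : f_l(y) \in B\}|.$$
Replaying the argument in Lemma \ref{not too close lemma}, for each nonzero $y \in Y_m$ the value $f_l(y) = \sum y_i l_i$ is uniform on the subgroup $g_y \mZ_n \subset \mZ_n$ with $g_y = \gcd(y_1,\ldots,y_k,n) \le m$, and since $|B| = \Theta(s) \gg m$ in the regime considered, $\mP(f_l(y) \in B) = (1+o(1))|B|/n$. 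Summing over the $|Y_m| - 1 \approx m^k$ nonzero points gives $\mE[N] = 2\alpha_2 + o(1)$.

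For the second moment, the diagonal contributes $\mE[N]$ and the off-diagonal contribution involves the joint law of $(f_l(y), f_l(y'))$, which is uniform on the image of the linear map $\mZ_n^k \to \mZ_n^2$, $l \mapsto \bigl(\sum y_i l_i,\, \sum y_i' l_i\bigr)$. For \emph{generic} pairs, where this image equals the full group $\mZ_n^2$, the joint probability factors as $(|B|/n)^2$, and summing over such pairs yields $\mE[N]^2(1+o(1))$. For \emph{degenerate} pairs---those encoding a linear relation modulo $n$---an analogous gcd-based enumeration (parallel to the one in Lemma \ref{not too close lemma}, possibly streamlined via Fourier inversion on $\mZ_n$) shows their combined contribution is $O(\mE[N])$. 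Hence $\operatorname{Var}(N) = O(\alpha_2)$, and Chebyshev's inequality gives $\mP(N = 0) \le C/\alpha_2$ for some constant $C$.

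Taking $\alpha_2$ large enough that $C/\alpha_2 \le \delta_2/(2 \cdot 2^k)$ and applying a union bound over the $2^k$ sign patterns $b \in \{-1,1\}^k$, together with Lemma \ref{not too close lemma} (whose exception probability can be absorbed into $\delta_2$ by tightening $\delta_1$), produces a point with the required property with overall probability $\ge 1 - \delta_2$. The main obstacle is the careful enumeration of degenerate off-diagonal pairs, whose structure depends on the divisor pattern of $n$; this re-enacts the arithmetic already handled in Lemma \ref{not too close lemma} but in a two-dimensional form, which is why I expect a Fourier expansion of $\b1_B$ on $\mZ_n$ to give the cleanest bookkeeping.
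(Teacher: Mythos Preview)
Your second-moment route is plausible but takes a harder path than the paper. The paper notices that the upper bound is a \emph{deterministic} consequence of Lemma~\ref{not too close lemma}: on that good event, each shifted copy $f(Y_m(j))$ of $f(Y_m^b)$ has its $m^k$ points pairwise $\alpha_1 s$-separated, so placing $x=\lceil 1/\alpha_1\rceil+1$ such copies on the cycle forces, by pigeonhole, two points $y',y''$ from different copies with $d(f(y'),f(y''))<\alpha_1 s$; their difference lies in $Y_{xm}^b\setminus\{0\}$ and maps within $\alpha_1 s$ of $0$, and rescaling $xm\to m$ finishes. No new probabilistic work is needed beyond the separation lemma already in hand. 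Your approach instead re-engages the randomness of $l$, and the piece you yourself flag as the ``main obstacle''---bounding the degenerate off-diagonal contribution by $O(\mE[N])$---is genuinely the hard part: for composite $n$ the image of $l\mapsto(y\cdot l,\,y'\cdot l)$ in $\mZ_n^2$ can be any of many subgroups, and obtaining the uniform $O(\mE[N])$ bound requires a stratification (e.g.\ by the Smith normal form of the $2\times k$ matrix with rows $y,y'$ over $\mZ_n$, or the Fourier decomposition you allude to) that is not a one-line adaptation of Lemma~\ref{not too close lemma}. It can likely be pushed through, but the pigeonhole argument bypasses it entirely; what your method would buy is independence from the separation lemma, at the cost of reproducing and extending its arithmetic.
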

\begin{proof}
A lower bound has been proved in the previous lemma for all points with $\alpha_1$. The main ingredient for this proof is kind of pigeonhole principle: for a cycle of length $n$ with selected points on it, if the distance between any two subsequent point is larger than $\alpha_1 s$, the total number of points has to be smaller than $\frac{n}{\alpha_1 s}$. 

Since $m$ can be an arbitrary number scaling with $m = \Theta(\sqrt{\rho}\nkd)$, it is sufficient to give an upper bound for $Y_{xm}$ for some integer $0<x<\infty$. Therefore, our goal is to prove that for any specific $Y_{xm} \in \cY_{xm}$, with probability $1-\delta_2$ there is a point in $f(Y_{xm}\setminus 0)$ that falls near $0$. Then the probability of their intersection(all $Y_{xm} \in \cY_{xm}$) can also be controlled arbitrarily close to $1$ by adjusting $\delta_2$.

To guide the proof, we use $k=3$ and the $Y_m^b$ has a $b$ of $(1,-1,1)$ as an illustration. The following images illustrate this case. But the following proof works for any $k$ and any $b$ in general. We can use Figure \ref{fig:Ym_b_example} to represent the $Y_m^{(1,-1,1)}$.
\begin{figure}[H]
  \centering
  \includegraphics[scale=0.15]{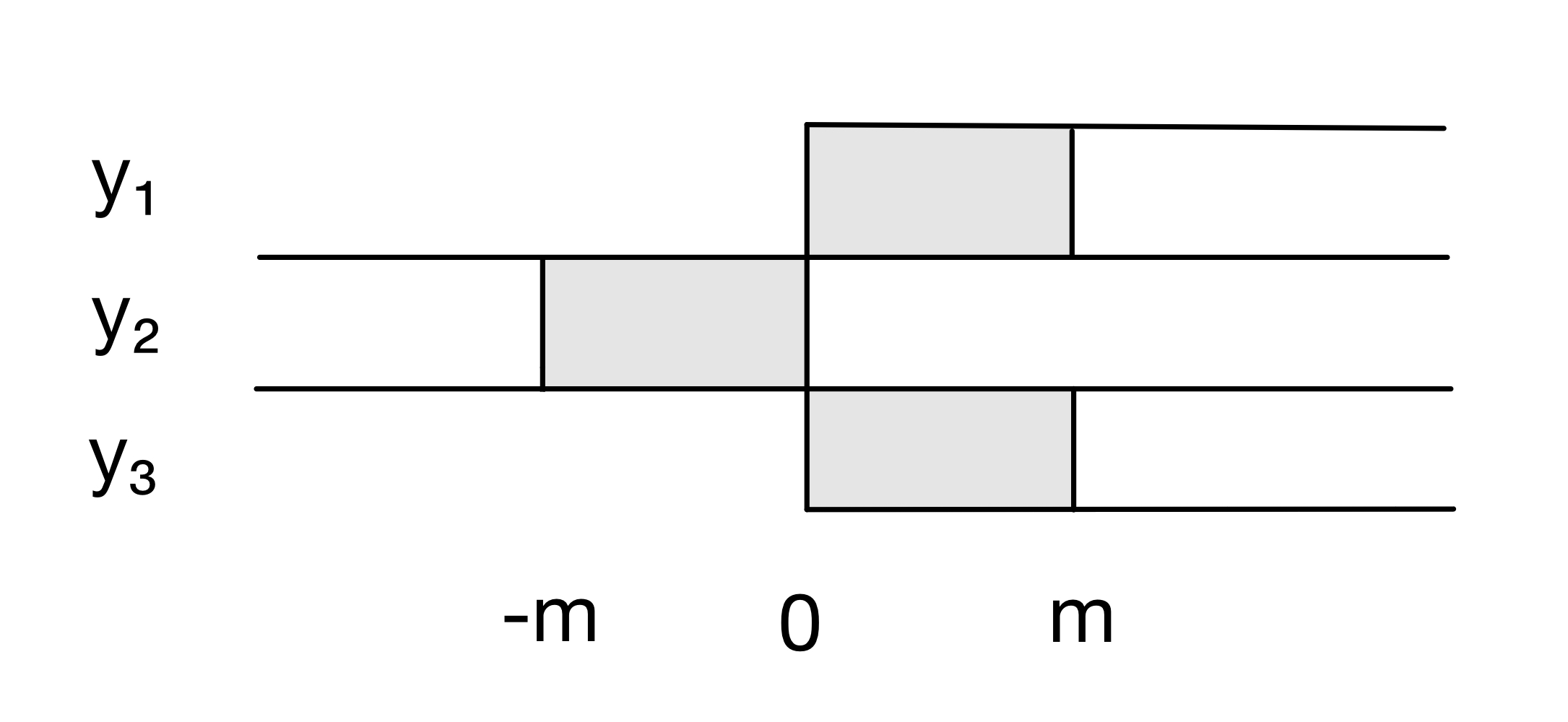}
  \caption{Intuitive illustration of $Y^{(1,-1,1)}_m$}
  \label{fig:Ym_b_example}
\end{figure}
To have a simpler notation, when $b$ is given, we denote 
\begin{align*}
    Y_m(j) = \{(y_1,\ldots,y_k) \;|\; (j-1)m\leq b_iy_i < jm\}.
\end{align*}
An illustration of $Y_m(j)$ can be seen in Figure \ref{fig:Y_mj_example}.
\begin{figure}[H]
  \centering
  \includegraphics[scale=0.15]{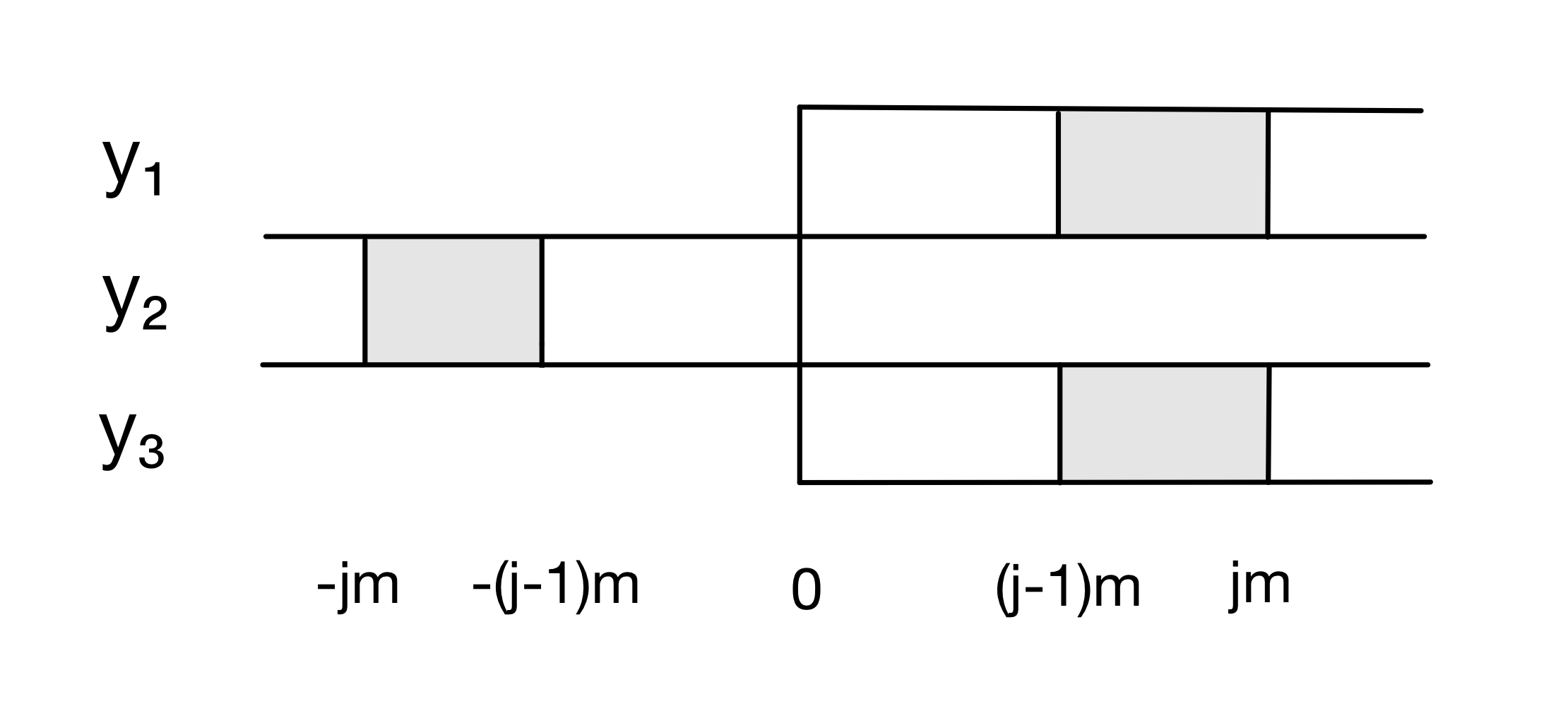}
  \caption{Intuitive illustration of $Y_m(j)$}
  \label{fig:Y_mj_example}
\end{figure}

Observe that $f(Y_m(j))$ is simply a shift of $f(Y_m(1))$. Formally, if we express the addition between a set and real number element-wise to be $\{x_1,\ldots, x_k\} +x = \{x_1+x,\ldots, x_k+x\})$, we have
\begin{align*}
f(Y_m(j)) = f(Y_m(1)) + \sum_{i=1}^{k} b_i \cdot(j-1)\cdot m\cdot l_i\; (mod \; n)   
\end{align*}
Suppose the the distance between two closest points in $f(Y_m(1))$ is larger than $\alpha_1 s$ for $\alpha_1>0$ with the help of Lemma \ref{not too close lemma}. Then the two closest points in $f(Y_m(j))$ is also automatically larger than $\alpha_1 s$ for any $j$ since it is just the shifted version of $Y_m(1)$. 
Now we place a number of $f(Y_m(j))$ to the cycle until it cannot hold that many points if it want to insist that the distance between two closest points must be larger than $\alpha_1 s$. Formally, by a pigeonhole principle we want to choose a finite integer $x$ such that
\begin{align*}
    \left|\bigcup_{j=1}^{x} Y_m(j)\right| > \frac{n}{\alpha_1 s}.
\end{align*}
This is achievable since $|Y_m(j)| = m^k = \frac{n}{s}$ for all $j$ and we can simply choose $x=\lceil\frac{1}{\alpha_1}+1\rceil$. Therefore, within $\bigcup_{j=1}^{x} Y_m(j)$, there must be two points $y'$ and $y''$ such that for their mapped distance $d(f(y'),f(y''))<\alpha_1s$. Note that $y'$ and $y''$ cannot be in the same $Y_m(j)$ since the distance of any two points in a single $Y_m(j)$ is bounded below by $\alpha_1s$. Therefore, $y'$ and $y''$ belong to different $Y_m(j)$, implying $y'-y''$ or $y''-y'$ should have the same sign pattern as $b$ that we started with. See Figure \ref{fig:Ymj-pigeon} for a follow-up illustration. 

\begin{figure}[H]
  \centering
  \includegraphics[scale=0.15]{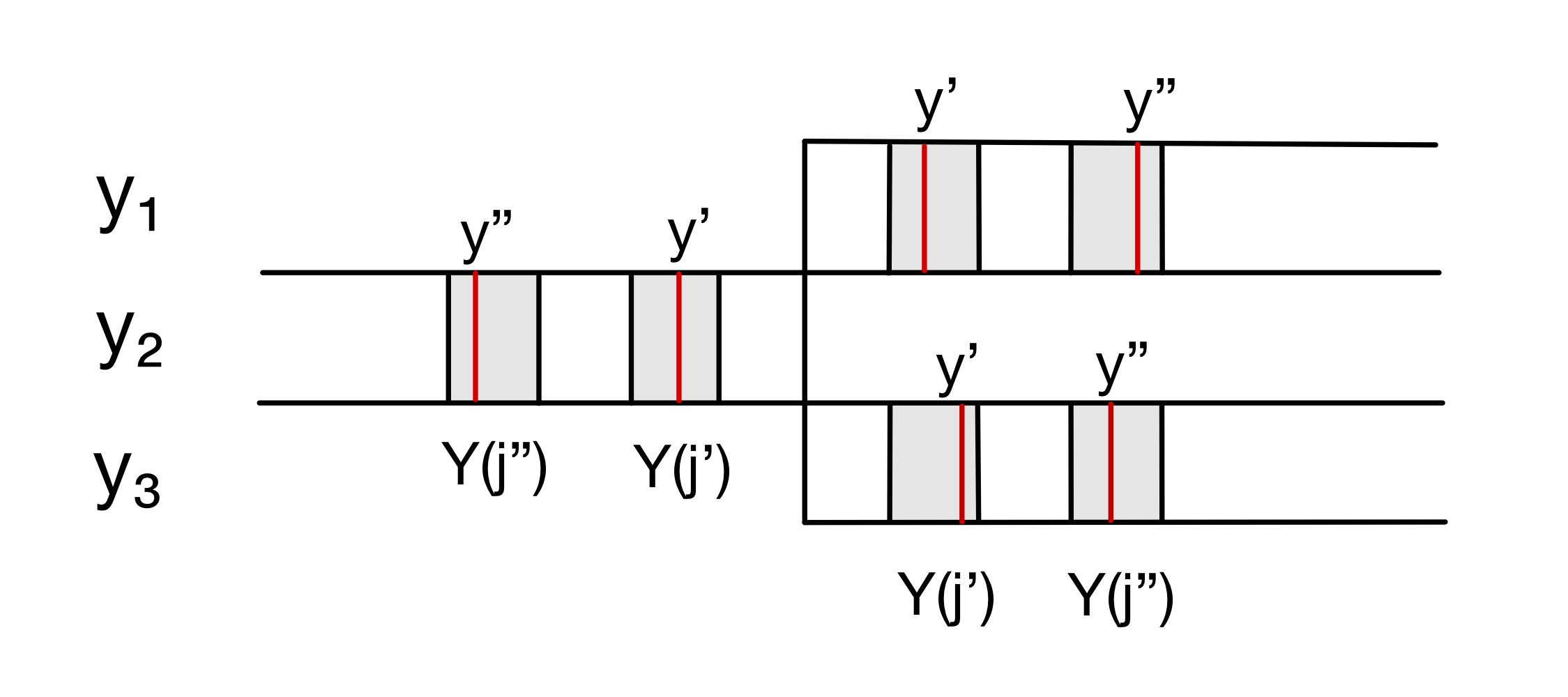}
  \caption{The red lines represents the location of $y'$ and $y''$. $y''-y'$ has the same "sign" indicator as $b$}
  \label{fig:Ymj-pigeon}
\end{figure}
Suppose w.l.o.g.\ that $y''-y'$ has the same sign pattern as $b$. Then $y''-y'$ is in the set $Y_{xm}^b$. Therefore, in $Y_{xm}^b$ there exist $y''-y'$ such that 
\begin{align*}
    d(f(y''-y'),0) = d(f(y'),f(y'')) < \alpha_1 s,
\end{align*}
in line with the claim we are after.
Given we can prove the property for $xm$, we can prove it for $m$ by appropriate scaling, which concludes our proof.
\end{proof}

Again we rely on the viewpoint that there is positive and negative direction on $\mZ_n$, allowing local comparisons despite the lack of global ordering. We extend the above lemma showing that there exist two points near 0 that are on the both side of 0 for each $Y_m$. 

\begin{lemma}
\label{both sides lemma}
For any $ 0<\delta_3<1$ as exception probability, there exists $\alpha_3>0$ such that with probability at least $1-\delta_3$, for all $Y_m \in \cY_m$, there exists $y',y'' \in Y_m$ such that $0<f(y')<\alpha_3 s$ and $-\alpha_3 s < f(y'') < 0$. In other words, with probability $1-\delta_3$, in each possible $Y_m$, there exist two points $y'$ and $y''$ such that their mapped position $f(y')$ and $f(y'')$ fall on the two sides of $0$, and their distance to 0 is of the order of the standard distance $s$.
\end{lemma}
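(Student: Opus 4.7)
The plan builds directly on Lemma~\ref{one side lemma}. I fix $Y_m = Y_m^b \in \cY_m$ and apply Lemma~\ref{one side lemma} to obtain $y^{(1)} \in Y_m$ with $\alpha_1 s < d(f(y^{(1)}), 0) \le \alpha_2 s$. Up to running the symmetric argument with the two sides of $0$ swapped, I may assume $0 < f(y^{(1)}) \le \alpha_2 s$, which already provides the positive-side witness $y' = y^{(1)}$.

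For the negative-side witness, I would sharpen the pigeonhole argument inside the proof of Lemma~\ref{one side lemma}. Instead of $\lceil 1/\alpha_1 + 1 \rceil$ strata I would take $x'$ strata $Y_m(1), \ldots, Y_m(x')$ with $x' > 2/\alpha_1$. The same counting then forces not merely one close pair but at least a linear-in-$m^k$ excess of close pairs, i.e.\ distinct elements of $\bigcup_j Y_m(j)$ whose $f$-images lie within circular distance $\alpha_1 s$. Each close pair $(y^{(a)}, y^{(b)})$ in distinct strata yields an element $z \in Y_{x'm}^b$ with $d(f(z), 0) < \alpha_1 s$, namely $z = y^{(b)} - y^{(a)}$ when $j_a < j_b$. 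Whether $f(z)$, viewed as an element of $\mZ_n$, lies in $(0, \alpha_1 s)$ (slightly positive) or in $(n-\alpha_1 s, n)$ (slightly negative) depends only on the circular ordering of $f(y^{(a)})$ and $f(y^{(b)})$.

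The heart of the argument is to show that among these close pairs, both orientations occur with probability at least $1 - \delta_3$ over the random edge lengths $l_1, \ldots, l_k$. Here one exploits that the strata $f(Y_m(j))$ are shifts of $f(Y_m(1))$ by multiples of $\Delta = m\sum_i b_i l_i$, which is essentially uniform on $\mZ_n$; for typical $\Delta$ the orientation of the closest cross-stratum neighbour alternates across the $x'$ shifted copies, so both signs of $f(z)$ are realised. The main obstacle will be formalising this alternation claim: one must count close pairs of each orientation and show that both counts are strictly positive outside an event of probability at most $\delta_3/2^k$, after which a union bound over the $2^k$ sign patterns $b$ closes the argument. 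The final step is the implicit rescaling $x'm \mapsto m$ already used at the end of the proof of Lemma~\ref{one side lemma}, which adjusts $\alpha_3$ while preserving the scale $s$, yielding the two points $y', y'' \in Y_m$ on opposite sides of $0$ within distance $\alpha_3 s$.
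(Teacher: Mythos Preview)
Your proposal has a genuine gap at exactly the place you yourself flag as ``the main obstacle''. You obtain many close pairs across strata and hence many elements $z\in Y_{x'm}^b$ with $d(f(z),0)<\alpha_1 s$, but you give no argument that both signs of $f(z)$ are realised. The heuristic that ``for typical $\Delta$ the orientation of the closest cross-stratum neighbour alternates'' is not a proof: knowing that $\Delta=m\sum_i b_il_i$ is (nearly) uniform on $\mZ_n$ says nothing about how the shifted copies of $f(Y_m(1))$ interlace. It is entirely consistent with the pigeonhole count that every close pair has the later stratum slightly to the right of the earlier one, so that every resulting $z$ has $f(z)\in(0,\alpha_1 s)$. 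Swapping the roles of the two strata gives $-z$, but $-z$ lies in $Y_{x'm}^{-b}$, not $Y_{x'm}^{b}$, so this does not produce a negative-side witness with the required sign pattern. In short, the step ``both counts are strictly positive outside an event of probability at most $\delta_3/2^k$'' is asserted, not proved, and I do not see how to fill it along the lines you sketch.

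The paper's argument avoids this obstacle by a different, constructive route. It takes the \emph{closest} nonzero point $y'\in Y_m$ and first shows, via Corollary~\ref{not too close lemma addon}, that with high probability every coordinate of $y'$ satisfies $|y'_i|>\eps m$ for some fixed $\eps<\alpha_1/\alpha_2$: if some $|y'_i|\le\eps m$ then $y'\in Y_m^{(i,\eps)}$, forcing $d(f(y'),0)\ge\alpha_1 s/\eps>\alpha_2 s$, a contradiction. With this margin in hand, it applies Lemma~\ref{one side lemma} to the smaller box $Y_{\eps m}$ (same sign pattern) to get $y'_\eps$ with $d(f(y'_\eps),0)<\alpha_2 s/\eps^k$. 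Then either $f(y'_\eps)$ is already on the negative side, or one sets $y''=y'-y'_\eps$: the coordinate margin guarantees $y''\in Y_m$, and since $y'$ is globally closest one has $0<f(y')<f(y'_\eps)$, so $f(y'')=f(y')-f(y'_\eps)<0$ with $|f(y'')|<\alpha_2 s/\eps^k$. This produces the negative-side witness deterministically once the high-probability events of the earlier lemmas hold, with no need for an orientation-counting argument.
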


\begin{proof}
After fixing $Y_m$, let $0\neq y' \in Y_m$ be the point whose mapped position is closest to 0:
\begin{align*}
    y' = \argmin_{0\neq y\in Y_m} d(f(y),0).
\end{align*}
Without loss of generality, assume $f(y')$ is on the positive, right of 0. By the previous Lemmas \ref{not too close lemma}, \ref{one side lemma}, we have with probability $1-\delta_2$ (jointly for all $Y_m$) that
\begin{align*}
    \alpha_1 s < d(f(y'),0) <\alpha_2 s.
\end{align*}
Assume $y' = (y'_1,\ldots,y'_k)$. Our next step is to prove with probability $1-\delta_3'$, with appropriate $\delta_3'<\delta_3$, there exist $0<\eps<1$ such that the absolute value of each coordinate of $y'$ is larger than $\eps m$, i.e. $|y'_i|> \eps m$ for each $i = 1,\ldots,k$. 
To capture these instances, while fixing $b$ of $Y_m$ let us first define the sets when a single $y_i$ is constrained to at most $\eps m$ in absolute value, $Y^{(i,\eps)}_m := Y_{m,\ldots,\eps m,\ldots m}$. Then our set of interest can be expressed using them as follows:
\[
  \tilde Y_{m,\eps} := Y_m \setminus \bigcup_{i=1}^k Y_m^{(i,\eps)} = \{y \mid \eps m < b_iy_i \le m \}.
\]

We will now show that with probability bounded away from 0 the $y'$ obtained is in $\tilde Y_{m,\eps}$. Clearly this means it does not get removed by any $Y_m^{(i,\eps)}$.

Lemma \ref{one side lemma} provides an example in $Y_m$, Corollary \ref{not too close lemma addon} provides some global condition for $Y_m^{(i,\eps)}$, we want to make sure the example does not get removed.
More precisely, assuming that the property of Corollary \ref{not too close lemma addon} holds we have for all $y\in Y_m^{(i,\eps)}$
\begin{equation}
    \label{eq:yepsbound}
    d(f_l(y),0)\geq \alpha_1 s/\eps.
\end{equation}
However, observe that if we had $\alpha_1/\eps>\alpha_2$ the two conditions cannot hold for the same $y$, consequently $y'\notin Y_m^{i,\eps}$. This simply needs tuning $\eps$ so that $\eps<\alpha_1/\alpha_2$. Repeating for $i=1,\ldots,k$ we get $y'\in \tilde Y_{m,\eps}$.

Consider now the set $Y_{\eps m}$ with the same $b$ as with the $Y_m$ we fixed.
Similarly to as in the full $Y_m$, let $0\neq y'_{\eps} \in Y_{\eps m}$ be the point whose mapped position is closest to 0:
\begin{align*}
    y'_{\eps} = \argmin_{0\neq y\in Y_{\eps m}} d(f(y),0).
\end{align*}

To make the logic clear, we have bounded number of favorable events -- on $Y_m,Y_m^{(i,\eps)},Y_{\eps m}$, as applying the previous lemmas -- where each can be tuned to have probability arbitrarily close to 1, so that a union bound on failures can still guarantee we get within the prescribed $\delta_3$ range for the exception probability.

Let us now investigate the properties of $y'_{\eps}$. First, we have three immediate observations based on comparing the ranges of minimization:
\begin{align*}
    y'_{\eps} &\neq y',\\
    d(f(y'),0) &< d(f(y'_{\eps}),0),\\
    y'-y'_{\eps} &\in Y_m.
\end{align*}
Moreover, by Lemma \ref{one side lemma}, $d(f(y'_{\eps}),0)$ is bounded above as
\begin{align*}
    d(f(y'_{\eps}),0) < \frac{\alpha_2 s}{\eps^k}.
\end{align*}
If $f(y'_{\eps})$ is on the negative, left side of $0$, we can assign $y'' = y'_{\eps}$ and the proof is ready, recalling the assumption on $f_l(y')$ being on the other side. If $f(y'_{\eps})$ is on the positive, right side of $0$, we can assign $y'' = y'-y'_{\eps}$ and then we have
\begin{align*}
    f(y'') &= f(y'-y'_{\eps})    = f(y') - f(y'_{\eps})\\
    &= d(f(y'),0) - d(f(y'_{\eps}),0) < 0,
\end{align*}
Confirming it being on the correct side. Moreover, its distance to 0 is also bounded by
\begin{align*}
    d(f(y''),0) < d(f(y'_{\eps}),0) =  \frac{\alpha_2 s}{\eps^k}.
\end{align*}
Noting that we introduce only a constant increase in the exception probability and the distance bounds, i.e. not depending on $n,m$, we conclude.
\end{proof}

At last, we can have our final most refined lemma about mapping evenly on the cycle.
\begin{lemma}
For any $ 0<\delta_4<1$ as exception probability, there exists $\alpha_4>0$ such that with probability at least $1-\delta_4$, for all $Y_m \in \cY_m$ and for all $y \in Y_m$, there exist another two points $y',y''\in Y_m$ such that $0< f(y)-f(y')< \alpha_4 s$ and $0< f(y'') - f(y) < \alpha_4 s$. In other word, for all possible $Y_m$ and any $y\in Y_m$, there exist two "neighbors" $y'$ and $y''$ such that their mapped positions $f(y')$ and $f(y'')$ fall on the two sides of $f(y)$, and their distance to $f(y)$ is of the order of the standard distance $s$.
\end{lemma}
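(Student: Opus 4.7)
The plan is to reduce the statement to Lemma~\ref{both sides lemma} by localizing it around any fixed $y \in Y_m$: for each such $y$ I will identify an octant of $\mZ^k$ in which $Y_m - y$ contains a full box of side length at least $m/2$, then invoke Lemma~\ref{both sides lemma} on that smaller box and translate back by $+y$. The step I expect to be the most delicate is the case analysis guaranteeing that such an octant always exists, even when $y$ sits near the boundary of the rectangle $Y_m = Y_m^b$.

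Fix $Y_m^b \in \cY_m$ and $y = (y_1,\ldots,y_k) \in Y_m^b$. For each coordinate $i$ I would set $\sigma_i := b_i$ if $b_i y_i < m/2$, and $\sigma_i := -b_i$ otherwise, and put $m' := \lfloor m/2 \rfloor$. A coordinate-wise check would confirm $y + Y_{m'}^\sigma \subseteq Y_m^b$: when $\sigma_i = b_i$ the $i$-th coordinate of $y + w$ is $y_i + \sigma_i z_i^\star$ with $z_i^\star \in [0, m')$, so $b_i(y+w)_i \in [0, m/2 + m') \subseteq [0, m)$; the case $\sigma_i = -b_i$ is symmetric, using that we then have $b_i y_i \ge m/2 \ge m'$, which leaves room to subtract $z_i^\star$. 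Crucially, $m' = \Theta(\sqrt\rho\nkd)$, so Lemma~\ref{both sides lemma} applies to $\cY_{m'}$, with associated standard distance $s' = n/(m')^k \le C_k\, s$ for some constant $C_k$ depending only on $k$ and $n$ large enough.

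Finally, I would invoke Lemma~\ref{both sides lemma} on $\cY_{m'}$ with exception probability $\delta_4$: on the favorable event, simultaneously for every sign pattern $\sigma$ there exist $w', w'' \in Y_{m'}^\sigma$ with $0 < f(w') < \alpha_3 s'$ and $-\alpha_3 s' < f(w'') < 0$. Setting $y' := y + w''$ and $y'' := y + w'$, the octant selection ensures $y', y'' \in Y_m^b$, while linearity of $f$ gives $f(y'') - f(y) = f(w')$ and $f(y) - f(y') = -f(w'')$, both lying in $(0, \alpha_3 C_k s)$. This delivers the conclusion with $\alpha_4 := \alpha_3 C_k$; since $\cY_{m'}$ contains only $2^k$ sets, no further union bound is needed beyond the one already embedded in Lemma~\ref{both sides lemma}.
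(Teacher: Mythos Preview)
Your proposal is correct and follows essentially the same approach as the paper: both arguments translate the problem to a half-sized box $y + Y_{m/2}^{b'} \subseteq Y_m^b$ for an appropriately chosen sign pattern $b'$ (your $\sigma$), then invoke Lemma~\ref{both sides lemma} on $\cY_{m/2}$ and translate back. Your treatment is in fact more explicit than the paper's, which simply asserts the existence of such a shifted sub-box via a picture, while you spell out the coordinate-wise sign choice and the containment verification.
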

\begin{proof}
The hard work has already been done, let us stitch together the elements.
Consider any point $y\in Y_m$. Again, the Figure \ref{fig:y_inYm} shows the example of $k=3$ and $b = (1,-1,1)$.
\begin{figure}[H]
  \centering
  \includegraphics[scale=0.15]{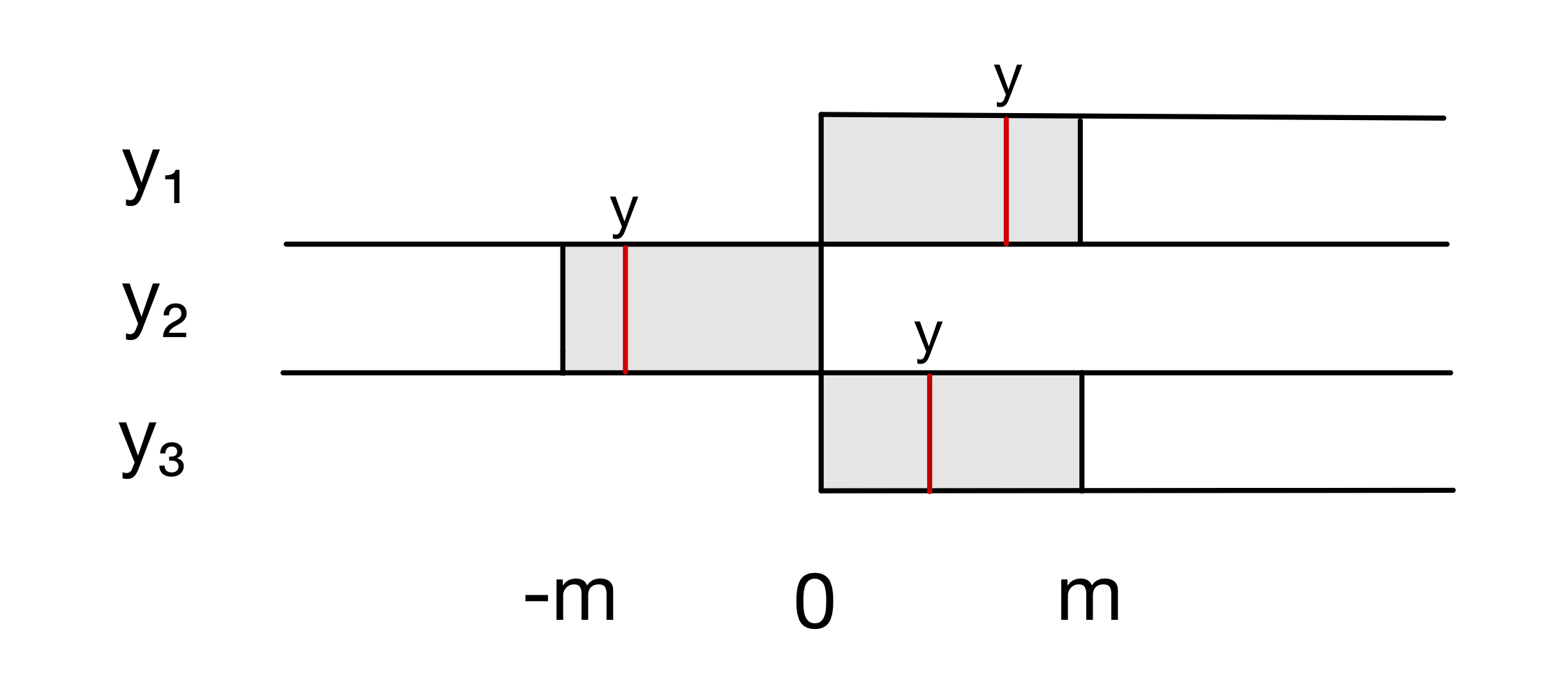}
  \caption{Illustration of $y$ in $Y_m$.} 
  \label{fig:y_inYm}
\end{figure}
Note that we can always choose a subset $Y_{m/2}(y)=y+Y_{m/2}^{b'}$ of $Y_m$ such that it is a shifted $Y_{m/2}$ with $0$ shifted to $y$, with possibly changing the implicit sign pattern $b$ to some different $b'$.
See the example continued in Figure \ref{fig:Ym2_Ym}, where we observe $b'=(-1,1,1)$.

\begin{figure}[H]
  \centering
  \includegraphics[scale=0.15]{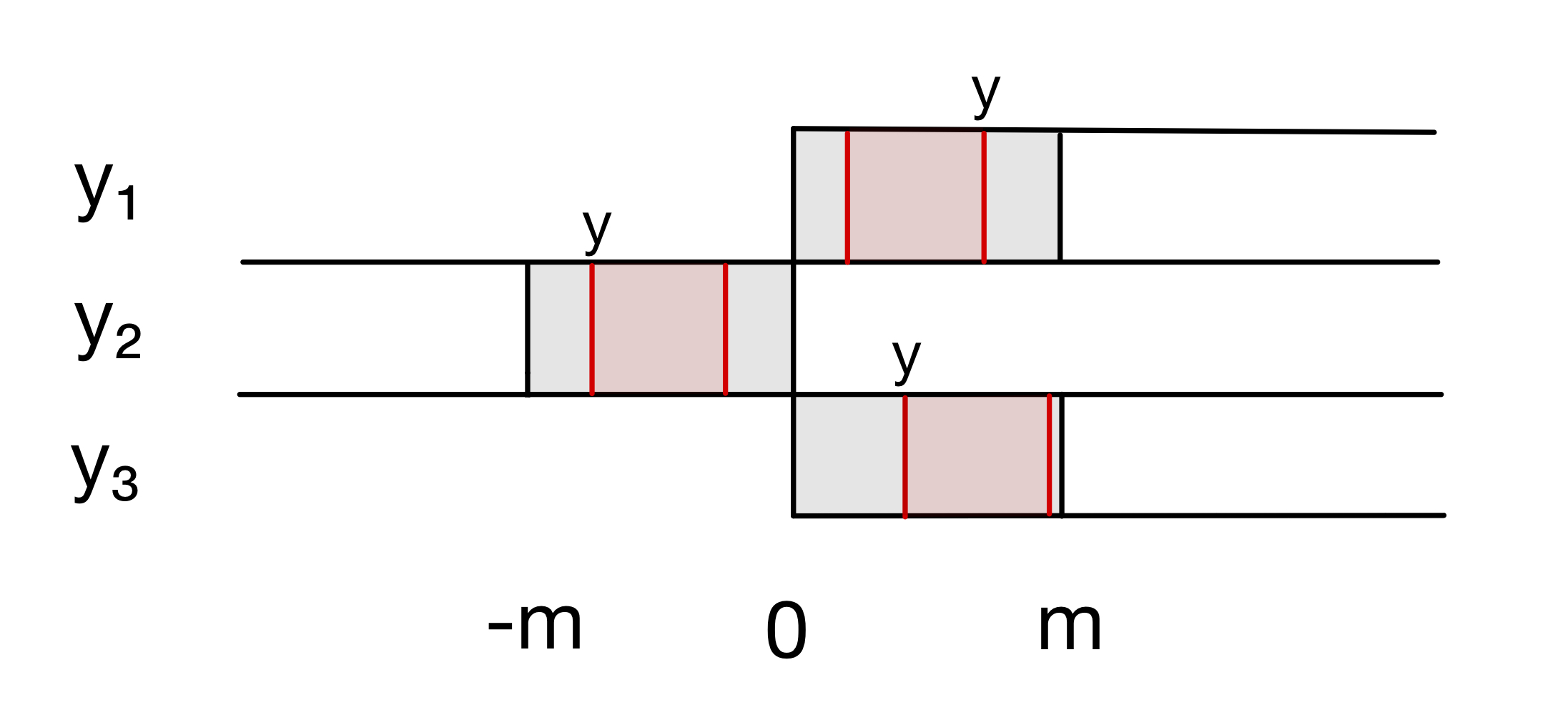}
  \caption{The red shaded area indicates $Y_{m/2}(y)$.} 
  \label{fig:Ym2_Ym}
\end{figure}
By Lemma \ref{both sides lemma} above, there exists two point $Y_{m/2}(y)$ whose mapped positions fall on the two sides of $f(y)$ with distance to $f(y)$ in the order of standard distance $s$, concluding the proof. %
\end{proof}

If the stated property holds, there cannot be a large gap among the mapped points, which would necessarily have a last point before the gap, causing a contradiction.

Let us recast this final statement relating $\mZ_n$ to $\Xi_L$ which we will rely on later, using the value of $m$ set by the definition of $Y$ in Lemma \ref{lm:ytargetprob} and the resulting $s$ by equation $\eqref{eq:standarddistance}$.
\begin{corollary}
\label{cor:hubspread}
For any $m = \lambda_3\sqrt{\rho}\nkd$ and any $0<\delta_4<1$ as exception probability, there exists $\alpha_4>0$ such that with probability at least $1-\delta_4$, for all points $x \in \mZ_n$, there exist $x', x'' \in \Xi_m$ such that $0<x-x'<\alpha_4 \rho^{-\frac k2}\nkb$ and $0<x''-x<\alpha_4 \rho^{-\frac k2}\nkb$.
\end{corollary}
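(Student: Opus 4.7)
The plan is to translate the preceding lemma, which works with abstract $Y_m \in \cY_m$, into the geometric statement on $\mZ_n$ by fixing the all-positive sign pattern and then upgrading from points of $\Xi_m$ to arbitrary points of $\mZ_n$ via a cyclic gap argument. First I would pin $b=(1,\ldots,1)$, so that $Y_m^b = Y_m^+ \in \cY_m$, and invoke the previous lemma with target exception probability $\delta_4$.

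Next I would unpack the standard distance. With the prescribed $m = \lambda_3\sqrt{\rho}\nkd$,
\[
    s = \frac{n}{m^k} = \lambda_3^{-k}\rho^{-k/2}\nkb,
\]
so the bound $\alpha_4 s$ coming from the previous lemma rewrites as $\alpha_4' \rho^{-k/2}\nkb$ with $\alpha_4'$ depending only on $\alpha_4,\lambda_3,k$; after relabelling the constant, this is exactly the scale appearing in the corollary.

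Now recall that $\Xi_m = f_l(Y_m^+) + L \pmod n$, i.e.\ a rigid cyclic shift of $f_l(Y_m^+)$, so cyclic distances and cyclic ordering are preserved. The neighbor property of the previous lemma therefore transfers to $\Xi_m$ verbatim: for every $\tilde x \in \Xi_m$ there are $\tilde x', \tilde x'' \in \Xi_m$ on the two sides, each within cyclic distance $\alpha_4 s$.

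The only actual argument left — and where I would do the main work — is the pigeonhole/gap conversion from points of $\Xi_m$ to arbitrary $x \in \mZ_n$. Enumerating $\Xi_m$ in cyclic order, no two consecutive elements can be more than $\alpha_4 s$ apart, for otherwise the element just before the gap would lack a right-neighbor in $\Xi_m$ within distance $\alpha_4 s$, contradicting the previous paragraph. Hence for any $x \in \mZ_n$, taking $x'$ to be the element of $\Xi_m$ immediately to the left and $x''$ the one immediately to the right produces the two neighbors required; if $x \in \Xi_m$ itself, apply the neighbor property at $x$ directly. The only real bookkeeping question — which I view as the ``main obstacle,'' though it is minor — is whether one should double $\alpha_4$ to accommodate an $x$ sitting deep inside a gap; since each gap is itself strictly shorter than $\alpha_4 s$, the bound survives with at most an inflation of $\alpha_4$ by a universal factor, which can be absorbed into the constant without affecting the statement.
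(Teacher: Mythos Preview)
Your proposal is correct and matches the paper's own (largely implicit) argument: the paper notes just before the corollary that the neighbor property rules out any large gap in the mapped set, and then ``recasts'' this for $\Xi_m$ with the specific $m$ and the resulting $s$, exactly as you do. Your worry about doubling $\alpha_4$ is unnecessary---each gap is already strictly below $\alpha_4 s$, so both endpoints are within that distance of any interior $x$---but this does not affect the argument.
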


\section{Diffusive effect}
\label{sec:diffuse}

Relying on the findings of the last two sections, we now turn our attention to $X_T$ for appropriate $T$, exploiting the additional randomness compared to $\xi_L$. Recall the target distance $L=\rho \nka$ and the fluctuation $m = \lambda_3\sqrt{\rho}\nkd$ we had for $y$.
Define the set of interest $W$ where the chain will be shown to be spread out, one near reference points collected in $V$:
\begin{align*}
V &= \{v\in \Xi_m \mid |v-h_j|\ge \alpha_5 \sqrt{\rho}n, j=1,\ldots, 2k\},\\
W &= \{w \mid \exists v\in V, |w-v|\le \alpha_4 \sqrt{\rho}\nkb\}, 
\end{align*}
with sufficiently small constant $\alpha_5>0$ to be chosen later. Observe that for $\rho=1$ in favorable cases -- i.e.\ when the property given by Corollary \ref{cor:hubspread} is present -- $W$ recovers most of the cycle, only missing intervals near the hub which gaps can be tuned small by choosing $\alpha_5$.

Consider a fixed $X_0=\xi_0$. For any arc usage $y$ we know where the track should be at distance $L$ following \eqref{eq:xld_def}, now we want to incorporate the time needed for the Markov chain to arrive there.

Let us follow the time evolution of edge and arc usages, let these be $(y_{i,t}),(u_{j,t})$, respectively.
Given target values of $(\hat y_i)$, they define a target position $\xi_L$ and the collection of target arc usages $(\hat u_j)$ (including some $\hat u_j^\pm$) as explained before in \eqref{eq:xld_def} and Lemma \ref{lm:ys_to_us} thus we can set the hitting time of this location as follows:
\begin{equation*}
\omega_L(\hat y) = \omega_L := \min\{t \mid u_{j,t} = \hat u_j, j=1,\ldots, 2k^*, X_t = \xi_L\}.
\end{equation*}
In effect, outside of the event $\{y=\hat y\}$ we get $\infty$, while on that event it encodes the time needed for accumulating a travel distance of exactly the prescribed $L$ (not counting for long range edge moves) via matching $u_{j,n}$ with $\hat u$.
Similarly, we define hitting of hubs before and after this target distance $L$ in a slightly asymmetric way. For the target position $f(\hat y) \in (h_j,h_{j+1}]$ let
\begin{align*}
    \omega_L^-(\hat y) = \omega_L^- &:=\omega_{L-f(\hat y)+h_j+\log^2n}(\hat y),\\
    \omega_L^+(\hat y) = \omega_L^+ &:=\omega_{L-f(\hat y)+h_{j+1}}(\hat y).
\end{align*}
Informally, $\omega_L^+(\hat y)$ is the hitting time of the hub after $f(\hat y)$, after having travelled distance $L$. 
Similarly, $\omega_L^-(\hat y)$ is the hitting time of the point $\log^2 n$ after the last hub before $f(\hat y)$.

Define $L_h:=L-3k\rho\nkc \log^2n$, representing the drift ensured to happen away from the hubs. The following decomposition lemma will help us separate complicated parts and highlight diffusive components of these hitting times.

\begin{lemma}
\label{lm:diffusedecompose}
Assume $\cB_1$.
There exists a constant probability $\mu_1>0$ with the following. For any fixed $X_0=\xi_0$ there is an event $\cB_{X_0}$ with $\mP(\cB_{X_0} \mid \cB_2, y)\ge \mu_1$, and
for any $\hat y \in Y = [-\lambda_3\sqrt{\rho}\nkd,\lambda_3\sqrt{\rho}\nkd]^k$ conditionally on $\cB_2\cap \cB_{X_0}\cap\{y=\hat y\}$ the overall hitting time $\omega_L(\hat y)$ is equidistributed with
\[
\sum_{i=1}^{L_h} \tau_1^i+ Z_{n,\hat y},
\]
which is a sum of independent variables: 
\begin{itemize}[itemsep=0pt, topsep=0pt]
    \item $\tau^i_1$ is an i.i.d.\ series, each is a hitting time of $+1$ from $0$ conditioned to not backtrack to $-\log^2n$ (on $\mZ$), in the usual biased setup;
    \item $Z_{n,\hat y}$ is a non-negative variable with $\mE(Z_{n,\hat y})=O(\rho\nkc \log^2n)$.
\end{itemize}

Let us express the position given by $\hat y$ as $f(\hat y) = h_j+s\in V$, where $s$ stands for the location within the arc. Conditioned on the event $\cB_2\cap\cB_{X_0}\cap\{y=\hat y\}$ the overall hitting time $\omega_L^-$, for a bit ahead of the last hub visited on the track, is equidistributed with an expression similar as above:
\[
\sum_{i=1}^{L_h-s+\log^2n} \tau_1^i+ Z_{n,\hat y}.
\]
\end{lemma}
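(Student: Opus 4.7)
Proof plan.

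My plan is to split the trajectory of $X_t$ from $\xi_0$ to $X_{\omega_L}=\xi_L$ into \emph{boundary zones}, the first $\log^2 n$ forward steps after each hub visit, and \emph{safe segments}, the remainder of each arc traversal, lying at distance $\ge\log^2 n$ from any hub. On $\cB_2$ no backtrack exceeds $\log^2 n$, so a walker that has entered a safe segment never returns to its boundary zone; by the strong Markov property each unit of forward progress inside a safe segment will be an i.i.d.\ copy of $\tau_1$. Collecting these across all safe segments supplies the main term $\sum_{i=1}^{L_h}\tau_1^i$, while the remaining time (in-hub looping, go/jump steps, and excursions inside boundary zones) will form $Z_{n,\hat y}$.

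Second I would count the safe distance: by Lemma \ref{lm:ys_to_us} there are $\sum_j \hat u_j = 2k\rho\nkc + O(k\sqrt\rho\nkd)$ arc traversals, so removing a boundary of width $\log^2 n$ at the start of each traversal leaves safe distance
\[
\sum_j \hat u_j(a_j-\log^2 n) = L - \Big(\sum_j \hat u_j\Big)\log^2 n \ge L - 3k\rho\nkc\log^2 n = L_h
\]
for $n$ large, and the $O(\sqrt\rho\nkd\log^2 n)$ slack is absorbed into $Z_{n,\hat y}$. Each boundary sojourn has expected duration $O(\log^2 n)$ by a standard gambler's-ruin estimate (the in-hub wait is geometric with parameter bounded away from $0$, and the $\log^2 n$ forward steps add $O(\log^2 n)$ via the drift); with $O(k\rho\nkc)$ sojourns in total this yields $\mE(Z_{n,\hat y}) = O(\rho\nkc\log^2 n)$. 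I will take $\cB_{X_0}$ to be the event that starting from $\xi_0$ the walker advances $\log^2 n$ forward before any backtrack to $\xi_0-1$, so that the interior initial position smoothly joins the first safe segment; biased gambler's ruin gives $\mP(\cB_{X_0}\mid \cB_2, y)\ge \mu_1 > 0$ uniformly in $\xi_0,y$. The $\omega_L^-$ decomposition follows identically: halting $\log^2 n$ past the last hub coincides with halting exactly at the end of the final boundary zone, so the final safe segment, of length $s-\log^2 n$, is never entered, leaving $L_h - s + \log^2 n$ i.i.d.\ $\tau_1$-terms with the same $Z_{n,\hat y}$.

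The main obstacle is to justify that conditioning on the global event $\{y=\hat y\}$ does not alter the marginal law of a safe-segment forward step. The resolution I intend to use is that, given $\cB_2$, the event $\{y=\hat y\}$ is determined entirely by the go/jump choices at the hubs, which take place inside boundary zones; conditioning on those choices leaves the biased-walk increments inside the safe segments independent with their unconditional $\tau_1$ law. Formalising this decoupling by a strong-Markov argument applied at each boundary-zone exit, together with a careful accounting of the boundary structure (especially at $\xi_0$, where $\cB_{X_0}$ is needed, and at $\xi_L$, where the final traversal may be truncated), will be the technical heart of the argument.
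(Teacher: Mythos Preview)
Your proposal is correct and follows essentially the same route as the paper: split each arc traversal into a $\log^2 n$ ``boundary zone'' after the hub and a ``safe segment'', use the strong Markov property at each boundary-zone exit to extract i.i.d.\ $\tau_1$-terms from the safe segments, count that at most $3k\rho\nkc$ boundaries are removed (via Lemma~\ref{lm:ys_to_us}) so that at least $L_h$ safe distance remains, and bound the expected boundary time by $O(\log^2 n)$ per visit. The one presentational difference is that the paper handles the decoupling you flag as the ``main obstacle'' by conditioning further on the entire track $\hat\xi$ (not just $\hat y$) and then taking a mixture over $\hat r,\hat\xi$, whereas you phrase it as conditioning on the go/jump decisions; these are equivalent, and your strong-Markov formulation is arguably cleaner. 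The paper is slightly more explicit about the boundary-sojourn expectation under the go/jump conditioning, bounding $\mE(\tau_{\log^2 n}\mid G)\le \mE(\tau_{\log^2 n})/p_G$ and similarly for $J$, which is what your ``standard gambler's-ruin estimate'' would unpack to.
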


Observe that the distribution of the correcting variable $Z_{n,\hat y}$ might depend on $\hat y$.%
\begin{proof}
    The two representation claims are nearly the same, only the length of the track changes. We spell out the proof w.r.t.\ $\omega_L$. The overall picture until reaching distance $L$ is an interlacing of walk pieces wandering near the hubs - depending on the $\hat y$ to be hit -- and long drifts hitting the next hub ahead on the cycle. The former will provide the corrective $Z_{n,\hat y}$ while the latter will bring the large i.i.d.\ composition. Let us expand this formally.

    If starting from $X_0 \in [h_j, h_j+\log^2 n)$, then we might have an unlucky start moving backwards, in the negative direction, the favorable case is $\cB_{X_0} = \{\tau^*_{h_j}>\tau^*_{h_j+\log^2n}\}$. Here $\tau^*_v$ denotes the hitting time of a given point $v$ on the cycle, in contrast to $\tau^i_1$, etc.\ capturing hitting times for relative movements. For the probability of this event we have
    $$
    \mP(\cB_{X_0}\mid \cB_2,y)\geq \mP(\tau_{-1}>\tau_{\log^2n})=\frac p q + o(1).
    $$
    Let us emphasize that the $o(1)$ term does not depend on $X_0,y$, thus we get the event with the uniform bound on the conditional probability as postulated.
    From now on, we will condition on $\cB_{X_0}\cap \cB_2$. 

    With the $\hat y$ given, choose also a collection $\hat r\in R$ as in \eqref{eq:rrange} and moreover even a track $\hat \xi$ to be followed. Once the claim is shown in this context, it immediately follows for their mixture once $\hat r, \hat \xi$ are set free.

    Let us now follow the Markov chain while conditioning on $\{\xi=\hat \xi\}$ (besides $\cB_2$) to get a view on the hitting time of the end of it. There will be two type of components:
    \begin{itemize}
        \item From a hub we reach $\log^2 n$ ahead on the arc next to it or on the one across. The time needed at each such occasion is independent, let us combine the steps needed to get $\tilde Z_{n,\hat\xi}$.
        \item From there on until the next hub, we take a few steps to hit $x+1$ from $x$, these form an i.i.d.\ collection of $\tau_1$ distributed variables, relying on $\cB_2$.
    \end{itemize}
For the number of $\tau^i_1$-s to be added, there is a total length $L$ travelled but $\log^2n$ omitted at each visit of a hub after traversing an arc, in total at $\sum_{j=1}^{2k^*} u_j$ occasions. Using Lemma \ref{lm:ys_to_us}, there are at most $3k\rho\nkc$ such occurrences for $n$ large enough, therefore at least $L-3k\rho\nkc\log^2n$ travel remains to be covered with independent $\tau^i_1$ steps, matching the first term in the expression claimed.

If any $\tau^i_1$ term remains, it is added to $\tilde Z_{n,\xi}$ to form the final $Z_{n,\hat\xi}$. As all terms of the decomposition have been included, the distribution representation claim is immediate. 

Concerning the distribution of $Z_{n,\hat\xi}$ - and afterwards the mixture $Z_{n,\hat y}$ - it is clearly non-negative by construction. For the bound on expectation, we rely on $\mE(\tau_1) = 1/(p-q)+O(\exp(-\lambda_2\log^2 n))$, a standard fact from the asymmetric gambler's ruin problem.

Combining with the maximal number $L-L_h$ of $\tau_1$ terms possibly included in $Z_{n,\hat\xi}$ their contribution is at most $\frac{3k\rho}{p-q}\nkc\log^2n+O(\exp(-\lambda'_2\log^2 n))$ in expectation. For the parts escaping the hubs, for each occasion we rely on $\mE(\tau_{\log^2n})=\log^2n/(p-q)+O(\exp(-\lambda'_2\log^2 n))$ as before, but we have to include the condition of escaping on the arc prescribed by $\hat \xi$. 
Conditioning on the eventual decision at a hub we see 
\begin{equation}
    \label{eq:taulog2n}
\mE(\tau_{\log^2n})=\mE(\tau_{\log^2n}\mid G)(p_G+o(1))+\mE(\tau_{\log^2n}\mid J)(p_J+o(1)),
\end{equation}
the $o(1)$ corrections due to the conditioning on $\cB_2$, similarly to how hub-dependent $p_G^{(j)}$ were adjusted in Lemma \eqref{lm:pg_pj}. Indeed, for a decision $G$ (or $J$) we see
\begin{equation*}
    \mP(G\mid \cB_2) = \frac{\mP(G\cap\cB_2)}{\mP(\cB_2)} = \frac{\mP(G)+O(\exp(-\lambda_2\log^2 n))}{1+O(\exp(-\lambda_2\log^2 n))}= \mP(G)(1+O(\exp(-\lambda_2\log^2 n)).
    \end{equation*}
Rearranging \eqref{eq:taulog2n} while using Lemma \ref{lm:pg_pj} leads to the bounds
\begin{align*}
    \mE(\tau_{\log^2n}\mid G) &\le \frac{\mE(\tau_{\log^2n})}{p_G+o(1)} \le \frac{p-q+2a+o(1)}{(p-q+a)(p-q)}\log^2 n, \\
    \mE(\tau_{\log^2n}\mid J) &\le \frac{\mE(\tau_{\log^2n})}{p_J+o(1)} \le \frac{p-q+2a+o(1)}{a(p-q)}\log^2 n.  
\end{align*}
Once again we have at most $3k\rho\nkc$ terms of those types included in $\tilde Z_{n,\hat\xi}$, combining with the above $\tau_1$ contribution we get
\[
\mE(Z_{n,\hat\xi}) \le (L-L_h)\cdot\mE(\tau_1)+3k\rho\nkc \cdot O(1)\log^2n =O(\rho\nkc \log^2n),
\]
confirming the bound on the expectation and immediately for the mixture $Z_{n,\hat y}$.
\end{proof}

From now on let $T = \frac{\rho}{p-q} \nka$ be the fixed target time to analyze, via the stopping times above. Promising diffusion near the hubs can be seen as follows.
\begin{lemma}
\label{lm:xspread1}
Assume $\cB_1$. For any $\alpha_6>0$ there exists $\mu_2>0$ so that for $n$ large enough we have the following. 
For any $\hat y \in Y$, if $f(\hat y) \in V$, then around this target,
\begin{equation}
    \mP(X_T=f(\hat y)+s' \mid y=\hat y)\geq \frac{\mu_2}{\sqrt{\rho}}\nkmb \qquad \forall s'\in [-\alpha_6\sqrt{\rho}\nkb,\alpha_6\sqrt{\rho}\nkb].
\end{equation}
\end{lemma}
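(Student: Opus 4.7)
The approach is to decompose $X_T$ into the behaviour up to $\omega_L^-$ and the free biased walk on $(\omega_L^-,T]$, and to combine two local CLTs via convolution. First condition also on $\cB_2\cap\cB_{X_0}$, which costs only a constant factor by Lemma \ref{lm:preferableevents} and the $\mu_1$-bound of Lemma \ref{lm:diffusedecompose}. Writing $f(\hat y)=h_j+s$ with $s,\,a_j-s\ge\alpha_5\sqrt\rho n$ guaranteed by $f(\hat y)\in V$, we have $X_{\omega_L^-}=h_j+\log^2 n$ deterministically on the track. Any target $f(\hat y)+s'$ with $|s'|\le\alpha_6\sqrt\rho\nkb$ sits well inside the arc $(h_j,h_{j+1})$: the walk over the $T-\omega_L^-$ horizon cannot reach $h_{j+1}$ (displacement needed at least $a_j-s-\log^2 n\gg\sqrt\rho\nkb$), and $\cB_2$ excludes backtracking to $h_j$. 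By the Markov property at $\omega_L^-$, the walk on $(\omega_L^-,T]$ is therefore a free biased walk starting at $h_j+\log^2 n$, independent of $\omega_L^-$; the residual $\{y=\hat y\}$ conditioning is a Markov-future event triggered only from $h_{j+1}$ onward. Letting $S_m$ denote the free-walk cycle displacement over $m$ steps, the target event rewrites as $S_{T-\omega_L^-}=s+s'-\log^2 n$, so (all probabilities conditional on $\{y=\hat y\}\cap\cB_2\cap\cB_{X_0}$)
\[
\mP(X_T=f(\hat y)+s') = \sum_t \mP(\omega_L^-=t)\,\mP(S_{T-t}=s+s'-\log^2 n).
\]

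A local CLT handles each factor. By Lemma \ref{lm:diffusedecompose}, $\omega_L^-\stackrel{d}{=}\sum_{i=1}^{N_-}\tau_1^i+Z_{n,\hat y}$ with $N_-=L_h-s+\log^2 n=\Theta(L)$, and the $\tau_1^i$ are i.i.d.\ integer-valued with finite non-degenerate variance; the classical local CLT for such sums gives $\mP(\sum_i\tau_1^i=r)=\Theta(1/\sqrt{N_-})=\Theta(\rho^{-1/2}\nkmb)$ at integer $r$ within $O(\sqrt\rho\nkb)$ of the mean $N_-\mE\tau_1\approx T-s/(p-q)$. Restricting to the constant-probability event $\{Z_{n,\hat y}\le C\rho\nkc\log^2 n\}$ via Markov's inequality (the threshold being $\ll\sqrt\rho\nkb$) bypasses the lack of an explicit variance bound on $Z_{n,\hat y}$: inside this event the additive $Z$-shift preserves the $\tau_1$-local-CLT order. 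Analogously, for $S_m$ with $m=T-t\approx s/(p-q)=\Theta(\sqrt\rho n)$, the classical biased-walk local CLT gives $\mP(S_m=d)=\Theta(1/\sqrt m)$ for $|d-m(p-q)|=O(\sqrt m)$.

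Parametrising the sum by $\epsilon:=(T-t)(p-q)-(s+s'-\log^2 n)$ turns it into a discrete Gaussian convolution in $\epsilon$ with two variances --- $\Theta(L)$ from the $\omega_L^-$-fluctuation (after scaling by $(p-q)^2$) and $\Theta(s/(p-q))\ll L$ from $S_m$ --- the former dominating. The density at $s'$ is then of order $1/\sqrt L=\rho^{-1/2}\nkmb$, uniformly over $|s'|\le\alpha_6\sqrt\rho\nkb$ within a constant number of standard deviations of the peak; absorbing $\mu_1$ and other constants into $\mu_2$ yields the claim. The main obstacle is the clean decoupling of the long-range $\{y=\hat y\}$ conditioning from the in-arc walk on $(\omega_L^-,T]$, handled via the Markov property plus $\cB_2$, together with the local-CLT workaround for $Z_{n,\hat y}$ via the Markov-inequality truncation described above.
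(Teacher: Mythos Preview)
Your approach is essentially the paper's: split at $\omega_L^-$, apply a local CLT to the $\tau_1^i$-sum (with Markov truncation of $Z_{n,\hat y}$) for the first factor, a biased-walk local CLT for the second, and convolve. One step is looser than the paper's version: your claim that ``the walk over the $T-\omega_L^-$ horizon cannot reach $h_{j+1}$'' is justified by comparing $a_j-s$ to $\sqrt\rho\,\nkb$, but the walk's typical displacement on that horizon is $\Theta(s)=\Theta(\sqrt\rho\, n)$, not $\sqrt\rho\,\nkb$, so the comparison only becomes meaningful \emph{after} restricting $T-\omega_L^-$ to the good LCLT window---which you do later. The paper sidesteps this ordering issue by treating hub visits on $[\omega_L^-,T]$ as an exceptional event at the end: conditionally on $X_T=f(\hat y)+s'$ being far from both hubs, any hub visit forces a backtrack of length $\gg\log^2 n$, of subpolynomial probability.
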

\begin{proof}
First let us note that the conditioning on $\cB_2$ is harmless to play with. For $\hat y\in Y$ relying on Lemma \ref{lm:ytargetprob} we get the lower and upper bounds
\begin{equation}
\label{eq:B2fine}
    1\geq\mP(\cB_2\mid y=\hat y) = \frac{\mP(\cB_2\cap \{y=\hat y\})}{\mP(y=\hat y)}\geq \frac{\mP(y=\hat y)-\exp(-\lambda_2\log^2 n)}{\mP(y=\hat y)}\geq 1-\exp(-\lambda_2'\log^2 n).
    \end{equation}
Fixing $\hat y\in Y$, we express the position within the arc as $f(\hat y) = h_j+s$ as before and use the conditioning and the decomposition of Lemma \ref{lm:diffusedecompose}, with various intermediate constants $\beta_i, \beta_i'>0$ to come.
We may bound $Z_{n,\hat y}$ by Markov's inequality as
\[
\mP(Z_{n,\hat y}\geq\beta_1\rho\nkc\log^3 n) \leq \frac{\beta_1'}{\log n}.
\]
We can then combine this with a (Local) Central Limit approximation of the sum of the $\tau^i_1$ terms, with overall mean $\frac{\rho}{p-q}\nka-\frac{s}{p-q}+O(\exp(-\lambda'_2\log^2 n))$ and overall variance $\Theta(\rho\nka)\gg \rho^2 n^{\frac2{k+1}}\log^6 n$, thus dominating the contribution of $Z_{n,\hat y}$. A crude minorizing estimate is possible uniformly as follows: %
\begin{equation}
    \begin{aligned}
        \label{eq:tauLminus-spread}
\mP(\omega^-_L=T-t \mid y=\hat y) &\geq \mP(\omega^-_L=T-t \mid \cB_2,\cB_{X_0},y=\hat y) \mP(\cB_{X_0} \mid \cB_2, y=\hat y) \mP(\cB_2, \mid y=\hat y)\\ &\geq\frac{\beta_2'}{\sqrt{\rho}}\nkmb 
\qquad \forall t\in \left[\frac{s}{p-q}-\beta_2\sqrt{\rho}\nkb,\frac{s}{p-q}+\beta_2\sqrt{\rho}\nkb\right],
    \end{aligned}
\end{equation}
where we include the condition on $\cB_{X_0}$ and the above comment \eqref{eq:B2fine} on $\cB_2$. Let us emphasize that this overall bound does not need the conditioning on $\cB_2\cap\cB_{X_0}$ anymore. Here $\beta_2$ can be arbitrarily increased at the price of decreasing $\beta'_2$ (requiring large enough $n$).

In essence this is a bound on the random hitting time of the reference point set before $f(\hat y)$, now we estimate in a similar way the remaining $t$ steps to get a view on the position at a deterministic time. Again the Local Central Limit minorized by a uniform shows for any $t$ as in \eqref{eq:tauLminus-spread} considered,
\begin{equation}
\label{eq:XTspread}
\mP(X_T = h_j+\log^2n+(p-q)t+s'' \mid \omega^-_L=T-t, y=\hat y) \geq \frac{\beta_3'}{\sqrt{s}}, \qquad \forall s''\in[-\beta_3\sqrt{s},\beta_3\sqrt{s}],    
\end{equation}
where we used that $s=\Omega(\sqrt{\rho}n)\gg \beta_2\sqrt{\rho}\nkb$ from the definition of $V$. 

Scanning and merging through the events $\{\omega_L^-=T-t\}$ can be used to get a minorizing measure for the position of $X_T$ conditioned on $\{y=\hat y\}$:
\begin{equation}
    \begin{aligned}
\label{eq:XTspread-comb}
    \mP(X_T = f(\hat y) + s' \mid y=\hat y) &\geq \sum_t \mP(X_T = f(\hat y) + s' \mid \omega^-_L=T-t, y=\hat y) \mP(\omega^-_L=T-t \mid y=\hat y).
\end{aligned}
\end{equation}
In order to utilize inequalities \eqref{eq:tauLminus-spread} and \eqref{eq:XTspread}, we have to identify the range of $t$ in the summation when both estimates are in force. For a fixed target $f(\hat y)+s'=h_j+s+s'$ the constraints of \eqref{eq:XTspread} can be rewritten in terms of $t$ as
\begin{align*}
    |s''| = |s+s'-\log^2n-(p-q)t| &\le \beta_3 \sqrt{s},\\
    \left|t-\left(\frac{s+s'}{p-q} - \frac{\log^2n}{p-q}\right)\right| &\le \beta_3 \frac{\sqrt{s}}{p-q},
\end{align*}
thus thanks to the assumption on $s'$ forms an interval around $s/(p-q)+O(\sqrt{\rho}n^\frac{k+2}{2k+2})$ of width $O(\sqrt{n})$. In \eqref{eq:tauLminus-spread} we see a similar but wider interval, around $s/(p-q)$ of width $O(\sqrt{\rho}n^\frac{k+2}{2k+2})$. In the end, if $\alpha_6 < \beta_2$ their intersection is still of length $2\beta_3\sqrt{s}=O(\sqrt{n})$, which determines the simultaneous validity of the bounds. Including the possible $t$ in \eqref{eq:XTspread-comb} we arrive at
\begin{equation}
\label{eq:XTspread-overall}
\mP(X_T = f(\hat y) + s' \mid y=\hat y) \geq 2\beta_3\sqrt{s} \cdot \frac{\beta_2'}{\sqrt{\rho}}\nkmb \cdot \frac{\beta_3'}{\sqrt{s}} = \frac{2\beta_3\beta'_3\beta'_2}{\sqrt{\rho}}\nkmb.
\end{equation}

To complete this part of the analysis, we have to ensure exceptional events do not change the behavior. On one hand, the time splitting for the two-phase analysis is legit, the event $\{\omega_L^- <T\}$ has probability bounded away from 0 as clear from \eqref{eq:tauLminus-spread} noting that $s\gg \sqrt{\rho}\nkb$ on $V$. 
On the other hand, afterwards the issue could be hitting a hub within the time-range $[\omega_L^-,T]$ analyzed. Given that the starting point $h_j+\log^2n$ at $\omega_L^-$ and the endpoint studied in $f(\hat y)\in V$ at $T$ are far from hubs by construction, if the walk had visited a hub during the walk, it would imply a substantial backtrack of length at least $\log^2 n$ to occur which has probability $O(\exp(-\lambda_2\log^2n))$ %
similarly to as in Lemma \ref{lm:preferableevents}.

To conclude, choose any $\alpha_6<\beta_2$ to ensure a range of validity for the bounds jointly, then any $\mu_2<2\beta'_2\beta_3\beta_3'$ and the minorization will hold for large enough $n$ even with the exceptional events taken into account.
\end{proof}

\begin{lemma}
\label{lm:xspread2}
Assume $\cB_1$. For any $\alpha_4>0$ there exists $\mu_3>0$ so that for $n$ large enough for any $w\in W$ we have
\begin{equation*}
\mP(X_T = w) \ge \rho^{-\frac{k+1}{2}}\frac{\mu_3}{n}.
\end{equation*}
Moreover, for any $\hat y\in Y, w\in W$ so that $f(\hat y)\in V, |w-f(\hat y)|\leq \alpha_4 \sqrt{\rho}\nkb$ we similarly have
\begin{equation*}
\mP(X_T = w, y=\hat y) \ge \rho^{-\frac{k+1}{2}}\frac{\mu_3}{n}.
\end{equation*}
\end{lemma}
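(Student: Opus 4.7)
The plan is to recognize the statement as an essentially bookkeeping consequence of the two preceding lemmas already in place: Lemma \ref{lm:xspread1} lower-bounds $\mP(X_T=w\mid y=\hat y)$ on the diffusive scale $\rho^{-1/2}\nkmb$, while Lemma \ref{lm:ytargetprob} lower-bounds $\mP(y=\hat y)$ on the combinatorial scale $\rho^{-k/2}n^{-k/(2k+2)}$. Their product is $\rho^{-(k+1)/2}n^{-1}$ up to constants, exactly what is claimed. I would therefore prove the joint (``moreover'') statement first and then read off the unconditional one.

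For the joint bound, I fix any $\hat y\in Y$ with $f(\hat y)\in V$ and any $w\in W$ satisfying $|w-f(\hat y)|\le \alpha_4\sqrt{\rho}\nkb$, and invoke Lemma \ref{lm:xspread1} with $\alpha_6:=\alpha_4$ (or any larger value) to secure the matching constant $\mu_2>0$. Factoring
\[
    \mP(X_T=w, y=\hat y) \;=\; \mP(X_T=w\mid y=\hat y)\,\mP(y=\hat y)
\]
and plugging in the two lower bounds yields the inequality with $\mu_3:=\mu_2\lambda_6$. The only arithmetic to confirm is the telescoping of exponents $\sqrt{\rho}\cdot\rho^{k/2}=\rho^{(k+1)/2}$ and $n^{(k+2)/(2k+2)}\cdot n^{k/(2k+2)}=n$, which is precisely how the scales $m,Y,V,W$ were set up throughout Sections \ref{sec:track} and \ref{sec:spread}.

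For the unconditional part, given any $w\in W$ I would unfold the definitions in turn: $w\in W$ provides a reference point $v\in V\subseteq \Xi_m$ with $|w-v|\le \alpha_4\sqrt{\rho}\nkb$, and $v\in \Xi_m$ provides some $\hat y\in [0,m]^k$ with $f(\hat y)=v$. Since $m=\lambda_3\sqrt{\rho}\nkd$ is the right endpoint of $Y$, this $\hat y$ lies in $Y$ and satisfies the hypotheses of the joint bound, so
\[
    \mP(X_T=w)\;\ge\;\mP(X_T=w, y=\hat y)\;\ge\;\rho^{-\frac{k+1}{2}}\frac{\mu_3}{n}
\]
closes the argument. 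I do not anticipate any real obstacle here: all of the analytic work --- the diffusive Local CLT near hubs and the Binomial counting of tracks --- has already been carried out in the two prior lemmas, and the only minor care needed is to pick some valid $\hat y\in [0,m]^k$ realizing the representative $v$ (non-unique in principle, but any choice works) and to note that $\alpha_6$ in Lemma \ref{lm:xspread1} may be taken to match the prescribed $\alpha_4$.
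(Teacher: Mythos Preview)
Your proposal is correct and matches the paper's own proof essentially line for line: both factor $\mP(X_T=w,\,y=\hat y)=\mP(X_T=w\mid y=\hat y)\,\mP(y=\hat y)$, apply Lemma~\ref{lm:xspread1} with $\alpha_6=\alpha_4$ to the first factor and Lemma~\ref{lm:ytargetprob} to the second, and then observe that the unconditional claim follows by picking, for a given $w\in W$, a witness $\hat y\in Y$ with $f(\hat y)\in V$ near $w$. The only cosmetic difference is that you spell out the exponent arithmetic and the unpacking of $W$ via $V\subseteq\Xi_m$ a bit more explicitly than the paper does.
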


\begin{proof}
Note that the first claim is only a slightly weaker, simplified form of the second one. The only thing we need to do is to combine the condition and the conditional probability investigated before. If $w\in W$, it means there is $\hat y\in Y$ such that $f(\hat y)\in V$ and $|f(\hat y)-w|\leq \alpha_4 \sqrt{\rho}\nkb$.
    By Lemma \ref{lm:ytargetprob} for the probability of having encountered this $\hat y$ we have that 
    \[
    \mP(y=\hat y) \geq \lambda_6 \rho^{-\frac k 2}n^{-\frac{k}{2k+2}}.
    \]
    Then by Lemma \ref{lm:xspread1}, with $\alpha_6=\alpha_4$ 
    there is $\mu_2>0$ so that
    \[
    \mP(X_T=w \mid y=\hat y) \geq \mu_2 \rho^{-\frac 12}\nkmb.
    \]
Combining the two we conclude.
\end{proof}

Note that until now we did not make use of the spread of points in $V$ and have claims that are true for general $0<\rho\le 1$. However, to confirm mixing, the focus is crucial.

\begin{proof}[Proof of Theorem \ref{eq:mainthm_bound}]
Set $\rho=1$ and set $X_0$ free. Notice that the exceptional event in 
Corollary \ref{cor:hubspread} is about the edge placement and not the walker or its starting point, thus we can analyze the chain jointly from all starting points for favorable cases. 

In this case $W$ is mostly covering $\mZ_n$, large parts of the arcs except near the hubs. For $\alpha_5$ small enough $|W|\geq n/2$ once $n$ is sufficiently large by the definition of $V,W$, then for any two initialization, Lemma \ref{lm:xspread2} provides
\[
\|P^T(x,\cdot)-P^T(y,\cdot)\|_{\rm TV} \leq 1-\frac{\mu_3}{2},
\]
where taking supremum in $x,y$ on the left hand side the mixing quantity $\bar d(T)$ is obtained, immediately resulting in $\bar d(T) \leq 1-\frac{\mu_3}{2}$. The submultiplicativity of $\bar d(\cdot)$ \cite[Chapter 4.4]{levin2017markov} ensures less than a $\frac{2}{\mu_3}\log \frac 1 \eps$ factor is needed to reach below $\eps$, together with $d(\cdot)\leq \bar d(\cdot)$ confirming the upper bound on mixing. 

Turning to the lower bound we will not rely on the results of Section \ref{sec:spread}, only implicitly require $\cB_1$, resulting in an a.a.s.\ statement. We will work by tuning $\rho$. Choose $\rho$ small enough so that $\rho^{\frac{k+1}2}<\frac {\mu_3}4$ and let $\alpha_5$ sufficiently small as above. By the shift-invariance of $\xi_L$ for appropriate $X_0=\xi_0=x$ we have 
$|Y\cap f^{-1}(V)|\geq\frac 12 |Y| = 2^{k-1} \lambda_3^k\rho^{\frac k2}n^{\frac{k}{2k+2}}$. 
Recall that each such $\hat y$ might lead to a final location $w$ around $f(\hat y)$ with range $2\alpha_4 \sqrt{\rho}\nkb$,
using Lemma \ref{lm:xspread2} and combining we get
\[
P(X_T\in W)\geq 2^k\alpha_4\lambda_3^k\mu_3,
\]
which is independent of the choice of $\rho$.
At the same time, $|W|\leq 2^{k+1}\alpha_4\lambda_3^k\rho^{\frac{k+1}2} n$, scaling with $\rho$. Thanks to the condition on $\rho$, using this set for comparing $X_T$ with the stationary uniform $\pi$ leads to
\[
\|P^T(x,\cdot)-\pi(\cdot)\|_{\rm TV} \geq P^T(x,W)-\pi(W) \geq 2^{k-1} \alpha_4\lambda_3^k\mu_3,
\]
confirming the lower bound claimed, with $\gamma^*<\rho/(p-q)$ and $\eps^*$ as in the equation above.

Finally, in order to check for cutoff, we combine the upper and lower bounds. 
Note that the claim on no cutoff does not depend on the exception probability, it is sufficient to confirm along a subsequence of the series of Markov chains in question.

Use the same small $\rho$ range as above, resulting in an estimate at least $4/n$ in Lemma \ref{lm:xspread2}.
Tuning $\alpha_5$ small enough, on the favorable event (w.r.t.\ Corollary \ref{cor:hubspread}) 
we have $|W| \geq \beta \rho^{\frac{k+1}2} n$ for some constant $\beta>0$.
Similarly as before, we have the upper bound on the total variation distance from the (uniform) stationary distribution $\pi$ for any starting point
\[
\|P^T(x,\cdot)-\pi(\cdot)\|_{\rm TV} \leq 1-\frac 1n |W| \leq 1-\beta \rho^{\frac{k+1}2},
\]
providing an upper bound on $d(T)$ still with $T = \frac{\rho}{p-q} \nka$, now with the small $\rho$ chosen.

Although these lower and upper bounds are very far, they are bounded away from 0 and 1 at a time of the appropriate order. In the end, choose $\rho_1\neq\rho_2$ small enough in the regime above. By construction the respective times $T$ will have constant ratio, while the total variation distance will be bounded away from 0 and 1 for both, confirming non-cutoff behavior.
\end{proof}

Let us note that the dependence on the exception probability $\delta$ in $\gamma(\delta)$ for the upper bound is through the dependence on $\mu_3$, itself coming from
Lemma \ref{lm:xspread2}, where the $\alpha_4$ used depends on $\delta$ in Corollary \ref{cor:hubspread}.

\section{Numerical results}
\label{sec:simulation}

Let us now complement our analytic results by numerical demonstrations.
First, we assess the effect of the length of the added edges.
\begin{figure}[H]
  \centering
    \resizebox{0.50\textwidth}{!}{\import{images/}{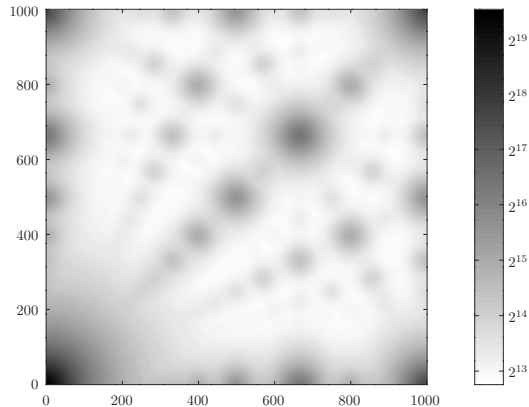}}
  \caption{$n=2000$, 2 added edges. The $x$-axis and $y$-axis represents the length of the 2 edges. The color represents $t_{\rm mix}(\frac{1}{4})$.}
  \label{2000-2}
\end{figure}
In Figure \ref{2000-2}, we work with a cycle with $n=2000$ points. We place 2 extra edges ($k=2$) to the cycle and iterate through all possible length of the 2 edges. The $x$-axis and $y$-axis represents the length of the 2 edges. For each different pair of edge lengths, we pick two position of the two edges randomly. 
Consistently with our discussion before, we analyze the Markov chains with the two added edges, computing the mixing time numerically. A heatmap of the resulting values is shown in Figure \ref{2000-2}, on a logarithmic scale.
The color in Figure \ref{2000-2} represents $t_{\rm mix}(\frac{1}{4})$. Theorem \ref{thm:mainthm} indicate that with 2 random edges, the mixing time is approximately
\begin{align*}
    2000^{\frac{3}{4}} \approx 2^{14.6},
\end{align*}
and the figure is in line with this approximation. Note that there are black dots and patches is Figure \ref{2000-2}, which indicate exceptional large mixing time. There is a rather regular pattern visible, the patches appear when the edge lengths are near (compatible) fractions of $n$ with small denominator.
We can observe that the mixing time is very sensitive to the edge lengths chosen.

Let us now turn our focus to the positioning of the edges which was previously randomized. Therefore, to be able to present, we randomly pick certain edge lengths and work with those.
\begin{figure}[H]
  \centering
  \begin{subfigure}[b]{0.48\textwidth}
      \includegraphics[width=\textwidth]{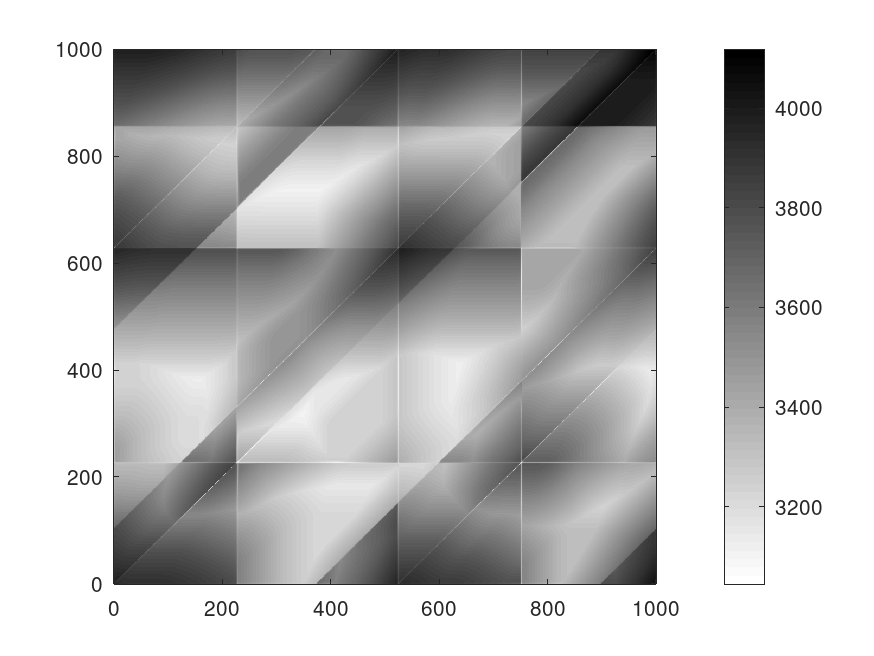}
        \caption{$n=1000$, 3 edges with lengths $(227, 372, 476)$.}
  \label{1000-3}
  \end{subfigure}
  \hfill
  \begin{subfigure}[b]{0.48\textwidth}
      \includegraphics[width=\textwidth]{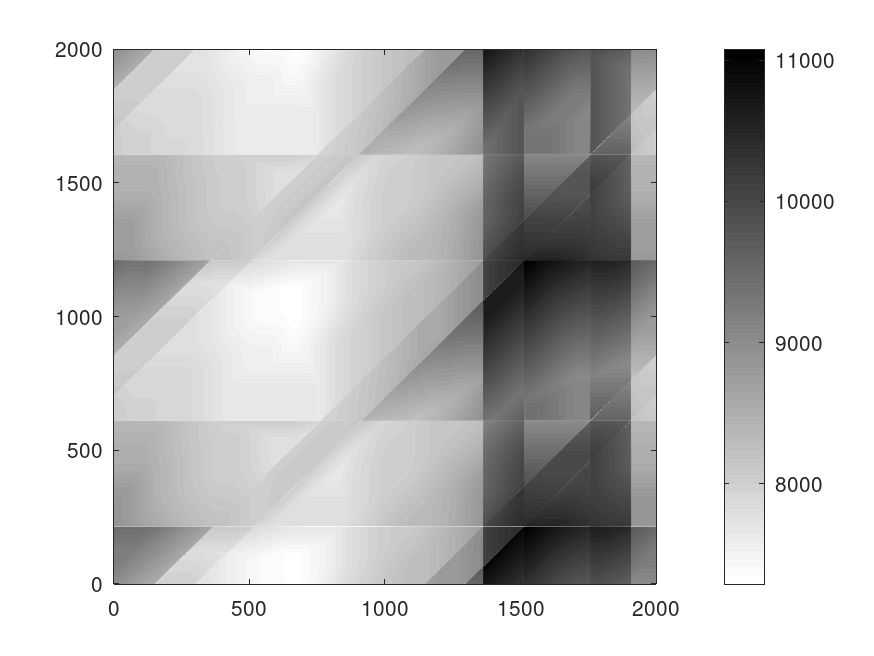}
    \caption{$n=2000$, 3 edges with lengths $(395, 995, 151)$.}
  \label{2000-3}
  \end{subfigure}
  \caption{Fluctuations depending on edge positions. The location of the left endpoint of the first edge is 0. The $x$-axis and $y$-axis represents the location of the left endpoints of the second and third edge. The color represents $t_{\rm mix}(\frac{1}{4})$.}
\end{figure}
In Figure \ref{1000-3}, we set $n=1000$, $k=3$, and the lengths of the three edges are $(227, 372, 476)$, while in
Figure \ref{2000-3} we work on a larger cycle $n=2000$, edge lengths chosen as $(395, 995, 151)$.
The left, negative endpoints of the three edges are $0,x,y$, allowing a 2D heatmap presentation again. The color thus represents $t_{\rm mix}(\frac{1}{4})$ in the given cycle, now on a linear scale. Our results indicate that the mixing times should be approximately
\begin{align*}
    1000^{\frac{5}{4}} \approx 5623 \qquad \text{and} \qquad 2000^{\frac{5}{4}} \approx 13375
\end{align*}
respectively, and the figures agree with this approximation. Moreover, we can see that the range of the mixing time is narrower than before in Figure \ref{2000-2}, suggesting that the mixing time is less affected by the location of the edges.

Finally, we numerically evaluate whether the exception probability imposed in Theorem \ref{thm:mainthm} is reasonable or an a.a.s.\ bound of the same order should be hoped hoped for. To this end, we fix $k=2$ and for certain $n$ we scan all possible pairs of edge lengths, in each case with a random placement and evaluate $t_{\rm mix}(\frac 14)$. In contrast to Figure \ref{2000-2}, we simply dump and sort the resulting values. After further scaling by $n^{4/3}$, the reference value provided by Theorem \ref{thm:mainthm}, we get Figure \ref{fig:sorteda}.

We both see a striking match of the curves and also the higher values diverging. To get a closer view, we normalize by the curve produced for $n=200$ and plot the relative values in Figure \ref{fig:sortedb}. Observe that the larger $n$ still present relative values out of bounds. This is consistent noting that there exist bad edge setups with $\Theta(n^2)$ mixing time.
\begin{figure}[H]
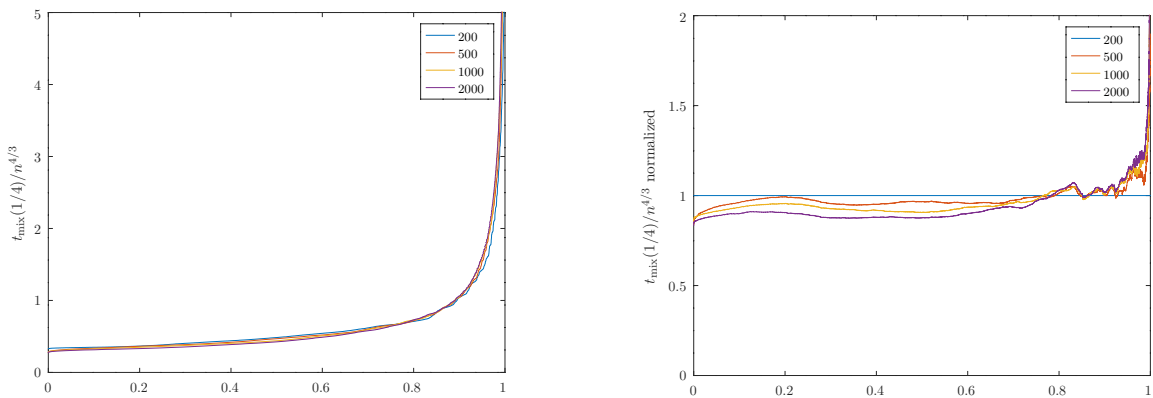

    \centering  
    \begin{subfigure}[b]{0.48\textwidth}
    \centering
    \resizebox{1.00\textwidth}{!}{\input{images/tmix_2edge_sorted.tex}}    
        \caption{Mixing times scaled with the reference bound $n^{4/3}$.}
         \label{fig:sorteda}
    \end{subfigure}
    \hfill
    \begin{subfigure}[b]{0.48\textwidth}
    \centering
    \resizebox{1.00\textwidth}{!}{\input{images/tmix_2edge_sorted_normalized.tex}}
        \caption{Scaled mixing times further normalized by the curve of $n=200$.}
         \label{fig:sortedb}
    \end{subfigure}
    \caption{Sorted mixing times for all possible edge lengths with $k=2$, comparing for few values of $n$.}
    \label{fig:sorted}
\end{figure}
These plots suggest there is indeed no concentration of the mixing times, no a.a.s.\ upper bound of $O(\nka)$ is expected.

\section{Conclusions}
\label{sec:conclusions}

Overall, a near-quadratic mixing speedup has been shown as a result of a minuscule perturbation to the original non-reversible Markov chain on the cycle. There are interesting details on the conditions to mention though.

For the lower bound, if $q=0$ then $\cB_1$ holds automatically, consequently the lower bound holds for \emph{all} choices of extra edges, improving on the a.a.s.\ claim. To get a similar global lower bound for $q>0$, non-trivial backtracking events would have to be handled, which is shifting to the more complex setup of adding small graphs rather than edges to the cycle.

For the upper bound, it would be great to see a range of examples or a matching lower bound which confirm the need of exceptional events (of non-vanishing probability). Edge lengths near fractions of $n$ with small denominator are expected to lead to such examples, as already visible in Figure \ref{2000-2}, likely leading to Diophantine elaborations.

In a broader context, the question arises when can an appropriate small, local perturbation of the Markov chain affect the global mixing behavior so strongly.

\bibliographystyle{siam}
\bibliography{refs}

\begin{thebibliography}{10}

\bibitem{apers2021characterizing}
{\sc S.~Apers, A.~Sarlette, and F.~Ticozzi}, {\em Characterizing limits and
  opportunities in speeding up {M}arkov chain mixing}, Stochastic Processes and
  their Applications, 136 (2021), pp.~145--191.

\bibitem{baran2023phase}
{\sc Z.~Baran, J.~Hermon, A.~{\v{S}}arkovi{\'c}, and P.~Sousi}, {\em Phase
  transition for random walks on graphs with added weighted random matching},
  arXiv preprint arXiv:2306.13077,  (2023).

\bibitem{ben2018comparing}
{\sc A.~Ben-Hamou, E.~Lubetzky, and Y.~Peres}, {\em Comparing mixing times on
  sparse random graphs}, in Proceedings of the twenty-ninth annual ACM-SIAM
  symposium on discrete algorithms, SIAM, 2018, pp.~1734--1740.

\bibitem{berestycki2018random}
{\sc N.~Berestycki, E.~Lubetzky, Y.~Peres, and A.~Sly}, {\em Random walks on
  the random graph}, The Annals of Probability, 46 (2018), pp.~456--490.

\bibitem{chen_and_al:lifting1999}
{\sc F.~Chen, L.~Lov{\'a}sz, and I.~Pak}, {\em Lifting {M}arkov chains to speed
  up mixing}, in Proceedings of the 31st Annual ACM Symposium on Theory of
  Computing, 1999, pp.~275--281.

\bibitem{diaconis:nonrev2000}
{\sc P.~Diaconis, S.~Holmes, and R.~M. Neal}, {\em Analysis of a nonreversible
  {M}arkov chain sampler}, The Annals of Applied Probability, 10 (2000),
  pp.~726--752.

\bibitem{diaconismiclo:markov2spectral2013}
{\sc P.~Diaconis and L.~Miclo}, {\em On the spectral analysis of second-order
  {M}arkov chains}, Annales de la Facult{\'e} des sciences de Toulouse:
  Math{\'e}matiques, 22 (2013), pp.~573--621.

\bibitem{durrett:rgd}
{\sc R.~Durrett}, {\em Random Graph Dynamics}, Cambridge University Press,
  2006.

\bibitem{gb:ringmixing_singleedge2023}
{\sc B.~Gerencs{\'e}r}, {\em Analysis of a non-reversible {M}arkov chain
  speedup by a single edge}, Journal of Applied Probability, 60 (2023),
  pp.~642--660.

\bibitem{hermon2020universality}
{\sc J.~Hermon, A.~Sly, and P.~Sousi}, {\em Universality of cutoff for graphs
  with an added random matching}, The Annals of Probability, 50 (2022),
  pp.~203--240.

\bibitem{krivelevich2013smoothed}
{\sc M.~Krivelevich, D.~Reichman, and W.~Samotij}, {\em Smoothed analysis on
  connected graphs}, SIAM Journal on Discrete Mathematics, 29 (2015),
  pp.~1654--1669.

\bibitem{levin2017markov}
{\sc D.~A. Levin and Y.~Peres}, {\em Markov chains and mixing times}, vol.~107,
  American Mathematical Society, 2017.

\end{thebibliography}

\end{document}